\newtheorem{thmA}{Theorem}
\newcounter{onlyone}
\numberwithin{onlyone}{section}
\newtheorem{thm}[onlyone]{Theorem}
\newtheorem{lem}[onlyone]{Lemma}
\theoremstyle{definition}
\newtheorem{defi}[onlyone]{Definition}
\newtheorem{block}[onlyone]{}
\newtheorem{rem}[onlyone]{Remark}
\newtheorem{ex}[onlyone]{Example}
\newtheorem{nota}[onlyone]{Notation}
\newcommand{\C}{\mathbb{C}}
\newcommand{\Z}{\mathbb{Z}}
\newcommand{\A}{\mathcal{A}}
\newcommand{\CP}{\mathbb{CP}}
\newcommand{\Strm}[2]
{\mathop{\mathrm{Str}}\nolimits^\circleddash\left(#1\,\middle|\,#2\right)}
\newcommand{\bigconnsum}{\mathop{\#}}
\newcommand{\set}[2]{\left\{ #1 \,\middle\vert\, #2 \right\}}
\newcounter{dummy}
\renewcommand{\thedummy}{\roman{dummy}}
\newenvironment{blist}
{
  \begin{list}{(\thedummy)}
  {
    \setlength\labelsep{4pt}
    \setlength\itemindent{4pt}
    \setlength\leftmargin{0pt}
    \setlength\labelwidth{0pt}
    \setlength\parsep{0pt}
    \usecounter{dummy}
  }
}
{
  \end{list}
}
\newcommand{\GNSz}{G_{\mathrm{NSz}}}
\newcommand{\Gn}{G_{\mathrm{Neu}}}
\newcommand{\Dc}{{\bf D}_c}
\newcommand{\Dd}{{\bf D}_d}
\newcommand{\Dz}{{\bf D}_0}
\newcommand{\Cc}{\mathscr{C}}
\newcommand{\GCc}{\Gamma_\mathscr{C}}
\newcommand{\SPen}{\mathcal{N}}
\newcommand{\Pen}{\mathcal{P}}
\newcommand{\DPen}{\mathcal{D}}
\newcommand{\Nreg}{N_{\mathrm{reg}}}
\newcommand{\Nsp}{N_{\mathrm{sp}}}
\newcommand{\Ndreg}{\mathscr{N}_{\mathrm{reg}}}
\newcommand{\Ndsp}{\mathscr{N}_{\mathrm{sp}}}
\newcommand{\Nd}{\mathscr{Nd}}
\newcommand{\V}{\mathscr{V}}
\newcommand{\W}{\mathscr{W}}
\newcommand{\Ar}{\mathscr{A}}
\newcommand{\VC}{\mathscr{V_{\mathscr{C}}}}
\newcommand{\VNSz}{\mathscr{V_{\mathrm{NSz}}}}
\newcommand{\WNSz}{\mathscr{W_{\mathrm{NSz}}}}
\newcommand{\ArNSz}{\mathscr{A}_{\mathrm{NSz}}}
\newcommand{\vNSz}{v^{\mathrm{NSz}}}
\newcommand{\wNSz}{w^{\mathrm{NSz}}}
\newcommand{\arNSz}{a^{\mathrm{NSz}}}
\newcommand{\eNSz}{e^{\mathrm{NSz}}}
\newcommand{\gNSz}{g^{\mathrm{NSz}}}
\newcommand{\tX}{\tilde X}
\newcommand{\tS}{\tilde \Sigma}
\newcommand{\mleq}{<\hspace{-.15cm}<}
\title{The Milnor fiber boundary of an arrangement determines its combinatorics}
\author{
Baldur Sigur{\dh}sson\footnote{
Universidad Polit\'ecnica Madrid,
Dpto. Matem\'atica e Inform\'atica Aplicadas a las Ingenier\'ias Civil y Naval,
C. del Profesor Aranguren 3, 28040 Madrid.
\href{mailto:baldursigurds@gmail.com}{baldursigurds@gmail.com}
Spanish grant of MCIN
(PID2020-114750GB-C32/AEI/10.13039/501100011033)}
\and
Juan Viu-Sos\footnote{
Universidad Polit\'ecnica Madrid,
Dpto. Matem\'atica e Inform\'atica Aplicadas a las Ingenier\'ias Civil y Naval,
C. del Profesor Aranguren 3, 28040 Madrid.
\href{mailto:juan.viu.sos@upm.es}{juan.viu.sos@upm.es}. Spanish grant of MCIN
(PID2020-114750GB-C32/AEI/10.13039/501100011033).
}
}
\begin{document}
\maketitle

\begin{abstract}
The boundary of the Milnor fiber associated with a complex
line arrangement is a three dimensional plumbed manifold,
and it is a combinatorial invariant.
We prove the reverse implication, which was conjectured
N\'emethi and Szil\'ard.
That is, this boundary of the Milnor fiber
determines the combinatorics of the arrangement.
Furthermore, we give an explicit method which constructs the poset associated 
with the arrangement, given a plumbing graph in normal form for the boundary.
\end{abstract}

\tableofcontents

\section{Introduction}
\begin{block}
A (projective) line arrangement $\A = \{\ell_1,\ldots,\ell_d\}$ is a finite set of (distinct) lines in the complex projective plane $\CP^2$. These objects are studied as both a family of (reducible) plane curves and a particular non-isolated homogeneous surface singularity $(X_\A,0)\subset\C^3$: their interest lies in the interplay between combinatorics, topology, and algebraic geometry. For general background on ($n$-dimensional hyperplane) arrangements, see~\cite{OrlikTerao92,Dimca:book}. %
A central goal is to understand the topological and geometric properties of arrangements through their \emph{combinatorics}, namely the intersection poset $P_\A$ determined by the lines and their intersection points.

It is known that the \emph{(embedded) topology of $\A$}, i.e. the homeomorphism type of the pair $(\CP^2,\A)$, determines the combinatorics. However, the converse was proved to be false, see e.g.~\cite{Rybnikov,ACCM:real_ZP,GueViu:config}. Some classic topological invariants of the complement $U_\A:=\CP^2\setminus\bigcup_{i=1}^d \ell_i$ turn out to be combinatorial, such as its cohomology ring $H^\ast(U_\A;\Z)$~\cite{OrlikSolomon}), whereas the fundamental group is not~\cite{Rybnikov,GBVS:real_pi1}. %
The combinatorial study of other properties as the topology of the Milnor fiber associated to $\A$, twisted cohomologies, characteristic varieties or finer invariants of $\pi_1(U_\A)$ has become an active interesting domain, see e.g.~\cite{Suciu17:survey,Dimca:book}.

The Milnor fiber $F_\A$ of $\A$ (i.e. the Milnor fiber of the surface germ $(X_\A,0)$) has been extensively studied so far~\cite{Suciu14:survey,Suciu17:survey,Dimca:book,PapadimaSuciu:survey}. Nevertheless, it remains unknown whether the first Betti number of $F_\A$ is combinatorial or not. On the other hand, the boundary $\partial F_\A$ is known to be a \textit{plumbed} three manifold. In \cite{Nem_Szil}, N{\'e}methi and Szil\'ard gave an explicit algorithm which produces a plumbing graph for this manifold, in a greater generality than considered in this text. %
In~\cite[6.1]{Nem_Szil}, the authors applied their main algorithm to construct the plumbing graph in the case of arrangements, constructing $\partial F_\A$ from the combinatorics of $\A$ (see also the discussion in~\cite[24.3.1]{Nem_Szil}). In particular, they derived several (combinatorial) formulas of topological invariants of $\partial F_\A$ and its monodromy operator. This point of view was recently used by Sugawara~\cite{sugawara1}  in order to obtain a nice combinatorial formula of the torsion in $H_1(\partial F_\A;\Z)$ for generic arrangements. %

After $\partial F_\A$ was combinatorially determined by N\'emethi and Szil\'ard, they conjectured that the reverse implication should also hold~\cite[24.4.11]{Nem_Szil}: namely, that the three dimensional manifold $\partial F_\A$ also determines the combinatorics 
of $\A$. %
In this paper, we give an affirmative answer to this conjecture. More concretely, we prove the following result.
\end{block}

\begin{thmA}\label{thm:intro}
The boundary of the Milnor fiber associated with a line arrangement
determines the combinatorics of $\A$ by an explicit algorithm.
\end{thmA}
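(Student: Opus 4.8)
The plan is to reverse-engineer the N\'emethi--Szil\'ard algorithm: they produce, from the combinatorics $P_\A$, a plumbing graph $\Gamma_\A$ for $\partial F_\A$, and the content of Theorem~A is that this assignment is injective up to the equivalence generated by plumbing calculus (blow-ups/blow-downs and other neutral moves), which preserves the oriented homeomorphism type of the plumbed $3$-manifold. Since every plumbed $3$-manifold has a \emph{normal form} plumbing graph, unique up to graph isomorphism (by the classical work of Neumann), the strategy is: start from the normal form of $\partial F_\A$, and show that $P_\A$ can be read off from it by an explicit procedure. So the first step is to recall the explicit shape of $\Gamma_\A$ from~\cite[6.1]{Nem_Szil}: its vertices, genera, Euler numbers and multiplicities are governed by (a) the lines $\ell_i$, (b) the intersection points $p$ with their multiplicities $m_p = \#\{i : p \in \ell_i\}$, and (c) the number $d$ of lines; roughly, each point of multiplicity $\geq 3$ and each line contributes a recognizable "central" piece, joined by chains of $(-2)$-curves (or other strings) whose lengths encode the incidences.

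Second, I would bring $\Gamma_\A$ into normal form and catalogue exactly which features of the arrangement survive as graph-theoretic invariants. The key observation to isolate is a set of \emph{distinguished vertices} in the normal form --- those of positive genus, or of high valency, or carrying a specific decoration --- that correspond bijectively to the "interesting" strata of $\A$, i.e. the lines and the multiple points. One expects the points of multiplicity $m$ to be detectable by the local valency/genus data (a node where $m$ chains meet, or a genus related to $m$), and the lines likewise; the pencil structure (a line $\ell_i$ passing through a collection of points $p$) should be visible as the adjacency, along chains, between the vertex of $\ell_i$ and the vertices of those $p$. Concretely, I would define a graph $\GCc$ (or poset) built purely from the normal form --- its vertices being the distinguished vertices, its edges recording which chains connect them and the numerical data along those chains --- and prove that $\GCc$ is isomorphic to (an augmentation of) the incidence graph of $P_\A$. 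That isomorphism, made algorithmic, is the theorem: given the normal form, list distinguished vertices, classify each as "line" or "point of multiplicity $m$" from its local data, record incidences from connecting chains, and output the resulting poset.

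The main obstacle, I expect, is \textbf{degeneracies and collisions in the plumbing calculus}: the passage to normal form can contract or merge pieces of $\Gamma_\A$ (e.g.\ a chain of length zero or one can disappear, or two vertices can be absorbed), so that a priori distinct combinatorial features of $\A$ might produce the \emph{same} normal form, or a single normal-form vertex might carry data from several strata. Ruling this out requires a careful analysis: one must check that the numerical invariants attached to lines and to each multiplicity class are "large enough" or "generic enough" that they are never annihilated or confused under the Neumann moves --- in particular that no $(-1)$-vertex that would trigger a blow-down ever appears among, or adjacent to, a distinguished vertex, and that the decorations distinguishing "line", "double point", "triple point", \dots\ remain pairwise distinct after reduction. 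This is likely where most of the technical work lies: a case analysis over small multiplicities (and over small arrangements, where the generic formulas may degenerate) together with an argument that, asymptotically, the length of the connecting chains strictly grows with the incidence data, so the graph $\GCc$ is faithfully recoverable.

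A secondary, lighter obstacle is bookkeeping the \emph{global} constraint $\sum_p \binom{m_p}{2} = \binom{d}{2}$ and similar identities: the normal form will present $\partial F_\A$ with no a priori marked "line at infinity," so the reconstruction must be invariant under $\mathrm{PGL}_3$ and must recover $P_\A$ only as an abstract poset; I would handle this by showing the output poset automatically satisfies the axioms of a line-combinatorics (a simple matroid of rank $3$ realizable or not), so that no spurious extra data is needed, and then invoke the fact --- already used in the forward direction of~\cite{Nem_Szil} --- that $P_\A$ determines $\partial F_\A$, to conclude that the composite (combinatorics $\to$ normal form $\to$ poset) is the identity on $P_\A$.
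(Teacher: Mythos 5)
Your overall strategy is the same as the paper's: pass to Neumann's normal form, identify distinguished vertices (nodes) corresponding to lines and multiple points, and read off the incidence poset from the strings connecting them. You also correctly identify the main danger as degeneration under plumbing calculus. However, as written the proposal has two genuine gaps where the actual mathematical work lies.

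First, you do not address the \emph{uniqueness of the bipartition}. The node graph extracted from the normal form is bipartite, with lines on one side and multiple points on the other, but a bipartite graph admits two candidate partitions into independent sets, and nothing in your ``classify each distinguished vertex as line or point from its local data'' rules out a consistent reading with the roles swapped. This is not a formality: the paper's Lemma~\ref{lem:nonexc} devotes most of its length to assuming the roles are reversed and deriving a contradiction, via an analysis of the multiplicity system on the connecting strings (showing that the reversed reading forces every Euler-number string to be of the form $[2^{t-1},3,2^{s-t}]$, hence every continued fraction to have length two, and ultimately forces every line to contain only two intersection points). Your proposal asserts that lines and points are ``never confused'' but gives no mechanism for proving it; the local data (genus, Euler number, valency) genuinely can coincide, and the contradiction only emerges from a global counting argument.

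Second, the ``case analysis over small multiplicities and small arrangements'' needs to be made precise, and it is not a matter of small $d$: the degenerate cases form four infinite families (single line, pencils, near-pencils, and double connected pencils --- exactly the arrangements containing a line with fewer than three intersection points). For these the normal form collapses drastically (e.g.\ a pencil of $d$ lines yields $(d-1)^2$ isolated $S^1\times S^2$ summands, and a near-pencil yields a single genus-$(d-2)$ vertex), so no poset can be read off in the generic way and each family must be recognized and treated separately; moreover one must prove these normal forms are never produced by a non-exceptional arrangement (for double pencils this requires showing, via a homology computation on the string joining the two pencil centers, that the two nodes do not get absorbed into one). Your final paragraph about $\mathrm{PGL}_3$-invariance and verifying line-combinatorics axioms is not needed --- the output is an abstract poset and the paper explicitly disclaims any realizability statement --- so that effort is better redirected to the two gaps above.
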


\begin{rem}
By \emph{explicit algorithm} we mean the following. Since every $\partial F_\A$ is a plumbed three manifold, one can associate a plumbing graph $G$ which realizes the diffeomorphism type $\partial F_\A\cong M(G)$. Using \textit{plumbing calculus}~\cite{Neu_plumb}, the \textit{normal form} $\Gn$ of the plumbing graph is algorithmically obtained. Finally, we are able to reconstruct $P_\A$ from $\Gn$, identifying the combinatorics. 
This algorithm can be applied to any plumbing graph, to see if
it can be constructed from a suitable poset. %

It says nothing,
however, about the realizability of this poset as the combinatorics of
an arrangement. %
One should know \emph{a priori} that $\partial F$ comes from an arrangement $\A$ in order to give sense to the output poset. %
As an example, let $P$ be a poset obtained as follows: take the poset of the \emph{Pappus arrangement} (i.e. the one with 9 lines and 9 triple points verifying the Pappus' theorem) and remove the relations between the line and the three triple points appearing in the conclusion of the theorem. %
From $P$ we can construct a plumbed manifold, but this would not be diffeomorphic to some $\partial F_\A$ for any $\A$, since there is no arrangement realizing $P$ as intersection poset.\\
\end{rem}

\begin{block}
This paper is organized as follows. %
In \Cref{s:arrangements}, the basic constructions about line arrangements are quickly introduced, as well as the notion of \emph{exceptional} arrangements. %
In \Cref{s:statement}, we present the main theorem together with the necessary definitions and concepts of plumbing manifolds involved. %
In \Cref{s:nem_szil} we go through the algorithm of N\'emethi and
Szil\'ard in the case of an arrangement and describe the output
$\GNSz$. We also introduce notation
which will be used throughout the rest of the paper. %
In \Cref{s:pencils} we recall known results about plumbing manifolds of pencils and
near-pencils. %
In \Cref{s:G}, we apply some of the steps of the process of normalizing
the plumbing graph $\GNSz$ to produce a graph $G$, which is in normal
form for \emph{non-exceptional} arrangements. %
In \Cref{s:double} we describe a plumbing graph in normal form
for $F_\A$ in the case of a double connected pencil.
This requires a careful analysis of one string in the graph $G$
introduced in the latter. %
In \Cref{s:nonexc} we show that for non-exceptional arrangements,
the boundary $\partial F_\A$ determines the poset $P_\A$.
\end{block}

\begin{nota}
We use the words \emph{string} and \emph{chain} interchangeably, the
former being used in \cite{Nem_Szil} and the latter in \cite{Neu_plumb}.
\end{nota}

\section{Preliminaries on line arrangements}\label{s:arrangements}

\begin{block}
  To any line arrangement $\A = \{\ell_1,\ldots,\ell_d\}$
  in $\CP^2$, one associates its \emph{intersection poset} $(P_\A, <)$ whose elements are:
  the projective plane $\CP^2$, 
  the lines $\ell_1,\ldots,\ell_d$, and
  singular points $p = \ell \cap \ell'$ for $\ell\neq \ell'$ in $\A$. 
  The set of these elements is a partially ordered set with respect to
  inverse inclusion, such that the plane is the unique minimal element.
  This poset has a \emph{rank}, which is given by codimension, that is,
  the plane has rank $0$, a line has rank $1$, and a point has rank $2$.
\end{block}
\begin{block}
We will assume throughout this paper that $\A$ is an arrangement of
$d$ lines. They are indexed as $\ell_1, \ldots, \ell_d$, having
precisely $c$ total intersection points, $p_{d+1}, \ldots, p_{d+c}$.
We will stick to using the letter $i$ to index objects
associated with lines, and $j$ to index objects associated with
intersection points.
In particular,
we associate with each line $\ell = \ell_i$ and point $p = p_j$
the numbers $\bar{n}_i$ of intersetion points on $\ell_i$, and $n_j$
the set of lines passing through $p_j$. Set also, for each
intersection point $p_j$, the integer
\[
  c_j = \gcd(d,n_j).
\]
\end{block}

\begin{block}\label{block:Xarrangements}
  Let $\alpha_i\in\C[x,y,z]$ be a linear function associated to each $\ell_i\in\A$. The product $f = \alpha_1\alpha_2\ldots \alpha_d$ is a defining polynomial of the underlying curve of $\A$ in $\CP^2$. Let $X_\A$ be the germ in $(\C^3,0)$ defined by $f=0$, i.e. the germ associated to the cone of $\A$ in $\C^3$. Note that $(X_\A,0)$ is the germ of a non-isolated singular surface whenever $d\geq 2$. %
  The \emph{Milnor fiber} of $X_\A$, i.e. 
  \[
    F_\A
    =
    f^{-1}(t) \cap B_\epsilon^6
    =
    \set{x \in \C^3}{f(x) = t,\;\|x\|\leq \varepsilon},
  \]
  is a well defined 4-manifold with boundary~\cite{Milnor_hyp}, assuming we
  choose parameters $0 < \eta \mleq \varepsilon \mleq 1$, and $t \in \C$
  with $0 < |t| < \eta$. We are interested in the boundary $\partial F_\A$ of the Milnor fiber, which
  is a (smooth) plumbed $3$-manifold and it will be treated in~\Cref{s:nem_szil}. Alternatively, since $f$ is a homogeneous polynomial, the Milnor fiber is also diffeomorphic to $F_A\cong f^{-1}(1)$ and $\partial F_A\cong f^{-1}(1)\cap S^5_R$ for sufficiently big radious $R\gg0$.
\end{block}

\begin{block}
  We conclude by introducing the class of line arrangements in $\CP^2$ for which the procedure described in \Cref{s:nonexc} does not work in general: exceptional arrangements. %
  We say that an arrangement $\A$ is \emph{exceptional} if there exists $\ell\in\A$ containing strictly less than three intersection points. %
  Exceptional arrangements can be completely classified in four simple families of arrangements.
  \begin{lem}
  The following list contains all exceptional line arrangements.
  \begin{enumerate}
  \item
  An arrangement of one line.
  \item
  A \emph{pencil} $\Pen_d$ of $d\geq 2$ lines passing through the same point.
  \item
  A \emph{near-pencil} $\SPen_d$ of $d$ lines, precisely $d-1$ of which
  pass through the same point.
  \item
  A \emph{double connected pencil} $\DPen_{a,b}$ of $d=a+b-1$ lines,
  $a$ of which contain a point $p_1$ and $b$ of which contain a different
  point $p_2$.
  \end{enumerate}
  \end{lem}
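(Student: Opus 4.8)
The plan is to prove the classification directly by a case analysis on the structure of an exceptional arrangement $\A$, organized by the possible "types" of lines with few intersection points. Call a line $\ell \in \A$ \emph{light} if it contains strictly fewer than three intersection points; by hypothesis $\A$ has at least one light line. First I would dispose of the trivial cases: if $d = 1$ we are in case (1). If $d \geq 2$, every line meets every other line, so a line contains at least one intersection point, and the intersection points on $\ell$ partition the remaining $d-1$ lines; hence a light line $\ell$ either contains exactly one intersection point (through which all of $\ell_2,\dots,\ell_d$ pass) or exactly two.

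Next I would analyze the case where some light line $\ell$ carries exactly one intersection point $p$. Then all $d$ lines pass through $p$: if this holds we are in the pencil case (2). If not, there is a line $\ell'$ not through $p$; but $\ell'$ meets $\ell$ somewhere, contradiction, so actually this subcase forces $\A = \Pen_d$ outright. The substantive case is therefore: every light line carries exactly two intersection points. Fix such a light line $\ell$ with intersection points $p$, $q$, and write $a$ for the number of lines through $p$ (including $\ell$) and $b$ for the number through $q$ (including $\ell$). Since every other line meets $\ell$ at $p$ or at $q$, and $\ell$ is the only line through both, we get $d = a + b - 1$. If $a = 1$ or $b = 1$ then $\ell$ is in fact a pencil situation, handled above; so assume $a, b \geq 2$.

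Now the heart of the argument: I want to show that either $\min(a,b) = 2$, giving a near-pencil (3), or the configuration through $p$ and $q$ is \emph{all} of $\A$, giving a double connected pencil (4) — and these are the only possibilities. Suppose $a, b \geq 3$. Take a line $m \neq \ell$ through $p$ and a line $m' \neq \ell$ through $q$. If there were a line $n$ passing through neither $p$ nor $q$, then $n$ would meet $\ell$ — impossible, as $\ell$'s only intersection points are $p,q$. Hence every line of $\A$ passes through $p$ or through $q$, which is exactly the defining property of $\DPen_{a,b}$ (the case $a+b-1 = d$ with the two distinguished points). It remains to check that this $\DPen_{a,b}$ is genuinely not already covered: if $\min(a,b) = 2$, say $b = 2$, then $q$ lies only on $\ell$ and one other line, and one checks the arrangement is the near-pencil $\SPen_d$ with the $d-1$ lines through $p$; if additionally $a = 2$ as well we get $d = 3$ lines, which is simultaneously $\Pen$-free, a near-pencil, and (degenerately) $\DPen_{2,2}$ — so the families overlap on small arrangements, and listing it under any one of (2)–(4) suffices for the statement "the list contains all exceptional arrangements."

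The step I expect to be the main obstacle is not any single deduction but making the case division \emph{exhaustive and non-redundant} in a clean way: one must be careful that a light line with two points can coexist with other light lines (e.g. in $\DPen_{a,b}$ both $p$ and $q$ may support further light lines), so the argument has to be phrased as "fix any light line and read off the global structure," rather than iterating line by line. The key observation that makes this work is that a light line's two (or one) intersection points \emph{already see every other line}, so the global arrangement is forced the moment one light line is fixed; once that is stated cleanly, each of (2)–(4) drops out by matching the count $d = a+b-1$ and the values of $\min(a,b)$.
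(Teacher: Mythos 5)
Your proposal is correct and follows essentially the same route as the paper: split by whether the distinguished line carries zero, one, or two intersection points, and in each case use the fact that every other line must meet that line in one of those points to force the global structure (pencil, near-pencil, or double connected pencil with $d=a+b-1$). The only cosmetic difference is that you spend extra words on the overlap of the families for small $d$ and on the redundant subcase $a=1$ or $b=1$ (which cannot occur once $p$ and $q$ are genuine intersection points), neither of which affects the validity of the argument.
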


  \begin{proof}
  Let $\A$ be an arrangement of lines in $\CP^2$. If $\A$ contains a line
  not containing any intersection point, then the arrangement consist of
  that single line, since any other line would intersect it.

  If $\A$ contains a line $\ell$ containing only one intersection point $p$,
  then the arrangement is a pencil $\Pen_d$ with $d \geq 2$.
  Indeed, any line in $\A$ which does not contain $p$ would intersect
  $\ell$ in some point other than $p$.

  Finally, assume that $\A$ contains a line $\ell$ containing precisely two
  intersection points, say $p_1,p_2$.
  In this case, any line in $\A$ contains either $p_1$ or $p_2$,
  since otherwise it would intersect $\ell$ at some point other than
  $p_1$ or $p_2$.
  Denote by $a,b$ the valencies of $p_1,p_2$. We can assume that
  $a \geq b \geq 2$. If $b = 2$, then $\A$ is a near-pencil $\SPen_{a+1}$,
  and if $b \geq 3$, then $\A$ is a connected double pencil $\DPen_{a,b}$.
  \end{proof}
\end{block}

\section{Plumbing manifolds and main theorem} \label{s:statement}

\begin{block}
  An \emph{plumbing graph} is a finite decorated graph $G$ with the following data:
  \begin{itemize}
    \item Each vertex $v$ is weighted by a \emph{genus} $g_v \in \Z_{\ge 0}$ and a \emph{Euler number} $e_v\in \Z$.
    \item Each edge brings a signature, $+$ or $-$.
  \end{itemize}
  The genus (resp. signature) is usually written as $[g_v]$ under the vertex (resp. over the edge) (resp. as $\varepsilon$) and is omitted if it is zero (resp. if it is $+$). The Euler number $e_v$ is placed over its the vertex $v$. 

  The (oriented) \emph{plumbing manifold} $M(G)$ associated to an (oriented) plumbing graph $G$ is the closed graph 3-manifold obtained as follows:
  \begin{enumerate}
    \item To each vertex $v$, one assigns an oriented $S^1$-bundle $\pi_{v}\colon E_v \to S_v$ over orientable surface $S_v$ of genus $g_v$, and Euler obstruction $e_v$. 
    \item For any edge $(v,w)$ of signature $\varepsilon \in \{\pm\}$, choose a point $p \in S_v$ and trivialize the fibration over a small disk $D_p$ centered at $p$ as $\pi_v^{-1}(D_p) \cong D_p \times S^1$. Similarly, choose $q \in S_w$ and trivialize its bundle over a neighborhood $D_q$ as $\pi_w^{-1}(D_q) \cong D_q \times S^1$. Then glue $\pi_v^{-1}(S_v \setminus D_p)$ and $\pi_w^{-1}(S_w \setminus D_q)$ along a homeomorphism $\varphi : \partial D_p \times S^1 \to \partial D_q \times S^1$ represented by $\varepsilon\cdot\begin{pmatrix} 0 & 1 \\ 1 & 0 \end{pmatrix}$.
  \end{enumerate}
  Disconnected plumbing graphs are allowed as well as disjoint unions of graphs, denoted by the symbol $\#$ (see e.g.  fig.~\ref{fig:sdpen}).
  
  The (oriented) diffeomorphism type of $M(G)$ is preserved by \emph{(Neumann's plumbing calculus)}, a set of graph operations R0-R8 on $G$. Plumbing calculus is sufficient in the sense that any two plumbing graphs representing the same (oriented) diffeomorphism type of plumbed manifolds are related by a finite sequence of these operations. For brevity, we omit the explicit descriptions of these operations. See~\cite[4.2]{Nem_Szil} or~\cite{Neu_plumb} for more details and a complete list of operations. Throughout this text, we explicitly use only the following operations:  R0 (reversing signs), R1 (blowing down), R3 ($0$-chain absorption), R5 (oriented handle absorption) and R6 (splitting).
  
Given a plumbing graph $G$ let us
add a few vertices to the graph, which are referred to as \emph{arrow heads},
which are joined by an edge to exactly one vertex in $G$ each.
To each vertex $w$ in this bigger graph, associate an integer $m_w$.
This family is a \emph{multiplicity system} if it satisfies the equation
\cite[eq. 4.1.5]{Nem_Szil}
\begin{equation} \label{eq:multiplicity}
  e_v m_v + \sum_w \varepsilon_{vw} m_w = 0
\end{equation}
for any vertex $v$ in $G$,
where the sum runs through all neighbors $w$ of $v$, including arrowheads.
A typical example is when $G$ is a resolution graph, the multiplicities
are valuations associated with exceptional divisor of some function,
and the arrowheads represent components of the strict transform
of the set defined by the function.
Such multiplicity systems can be used to calculate Euler numbers,
as in \cite{Nem_Szil}.
\end{block}

\begin{block} \label{block:special_node}
Let $G$ be any plumbing graph.
Denote by $\delta_v$ the valency of the vertex, i.e. the number of
neighbors of $v$.
A \emph{regular node} of $G$ is any vertex having $\delta_v \geq 3$ or
$g_v \neq 0$. The \emph{regular node graph} associated with $G$
is the graph $\Nreg$, whose vertices are regular nodes in $G$,
and edges correspond to strings connecting nodes in $G$.
Denote by $\Ndreg$ the set of regular nodes.

We define a \emph{special node} to be a vertex having
precisely two neighbors, each having genus $0$ and Euler number $-1$.
The \emph{special node graph} $\Nsp$ associated with $G$ is the graph whose
vertices are regular or special nodes in $G$ and whose edges correspond to
strings in $G$. Denote by $\Ndsp$ the set of nodes, regular or special.
\end{block}

\begin{rem}
\begin{blist}
\item
If the graph $G$ is in normal form, then the two neighbors of a
special nodes do not lie on a string, i.e. they have valency $\geq 0$.
Indeed, any vertex in a graph in normal has Euler number $\leq -2$.
\item
The graphs $\Nreg$ and $\Nsp$ have the same topological type as $G$.
\end{blist}
\end{rem}

\begin{block}
Any finite poset $(P,<)$ of rank $2$ with a unique minimal element determines
a bipartite graph $B$ whose vertices are elements of rank $1$ or $2$,
with an edge joining two elements if they are comparable.
This bipartite graph comes with a chosen maximal independent set
$\Nd_1$, the set of vertices corresponding elements of rank $1$.
This correspondence is invertible, in that,
a bipartite set with a chosen maximal independent set $(B,\Nd_1)$ naturally
defines a poset, such that these two operations are inverse to each other.
We say that $(P,<)$ and $(B,\Nd_1)$ \emph{correspond to each other}.
\end{block}

\begin{block}
A connected plumbing graph $G$ \emph{determines} a poset $(P,<)$
if the special node graph $\Nsp$ associated with $G$ with vertex set $\Ndsp$
is bipartite, and
has a unique partitioning into maximal independent sets
$\Ndsp = \Nd_1 \amalg \Nd_2$ such that
\begin{itemize}
\item
$(\Nsp, \Nd_1)$ corresponds to $(P,<)$.
\item
nodes in $\Nd_1$ have Euler number $-1$ and genus $0$,
\item
$\Nd_1$ contains no special nodes,
\item
if a chain in $G$ joining $v \in \Nd_1$ and $w \in \Nd_2$
has Euler numbers $-h_1,\ldots,-h_r$, from $v$ to $w$,
and $w$ is a regular node, then
\[
  [h_1,\ldots,h_r] = \frac{|\Nd_1|}{\delta_w}.
\]
\end{itemize}
\end{block}

It is worth noticing that not every plumbing graph determines a poset, e.g. complete bipartite graphs. The main strategy of our classification is that any normalized plumbing graph of a line arrangement determines a poset, except for finitely many families of arrangements. Namely, these are the trivial arrangement, (near-)pencils and double connected pencils, i.e. the four families of exceptional arrangements. For each of these, their associated $\Gn$ is also classified. %

We now present our main result, which is a reformulation of Theorem~\ref{thm:intro}, since obtaining $\Gn$ from a plumbing graph $G$ is an algorithm process (see \Cref{s:nem_szil}). The proof is organized across the following sections.

\begin{thm} \label{thm:main}
Let $\Gn$ be a plumbing graph in normal form
representing the boundary $\partial F_\A$
of the Milnor fiber associated with an arrangement $\A$ of lines in
the complex projective plane $\CP^2$.
\begin{enumerate}[label=(\roman*)]

\item \label{it:main_empty}
$\A$ contains precisely one line
if and only if
$\Gn$ is the empty graph.

\item \label{it:main_pencil}
$\A$ is a pencil $\Pen_d$ on $d \geq 2$ lines
if and only if
$\Gn$ is the disjoint union of $(d-1)^2$ vertices with genus and Euler
number zero.

\item \label{it:main_semi}
$\A$ is a near-pencil $\SPen_d$ on $d \geq 3$ lines
if and only if
$\Gn$ consists of one vertex with Euler number $0$ and genus $d-2$,

\item \label{it:main_double}
$\A$ is a double connected pencil $\DPen_{a,b}$ with $a\geq b \geq 3$
if and only if
the regular node graph of $\Gn$ is a complete bipartite graph on
$a$ and $b$ vertices.

\item \label{it:main_nonexc}
$\A$ is non-exceptional if and only $\Gn$ determines
a poset $(P,<)$ of rank $2$, and in this case, $(P,<)$ is the poset
associated with $\A$.

\end{enumerate}
\end{thm}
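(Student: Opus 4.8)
The plan is to prove the five equivalences of Theorem~\ref{thm:main} in order, treating the four exceptional families first (items~\ref{it:main_empty}--\ref{it:main_double}) and then reducing the non-exceptional case~\ref{it:main_nonexc} to the structural analysis carried out in the later sections. For the forward implications of the exceptional cases, I would feed each exceptional family into the N\'emethi--Szil\'ard algorithm (as recalled in \Cref{s:nem_szil}) to obtain an explicit plumbing graph $\GNSz$, and then run the relevant plumbing calculus moves (R0, R1, R3, R5, R6) to bring it to normal form. For the single line, $F_\A$ is a disk and $\partial F_\A = S^3$, whose normal form is the empty graph. For a pencil $\Pen_d$, the cone $X_\A$ is a cone over $d$ concurrent lines, and the resulting boundary is a connected sum of lens spaces / $S^1\times S^2$'s; counting via the monodromy and the known Betti numbers should force the normal form to be $(d-1)^2$ isolated trivial vertices (this matches the known first Betti number $b_1(\partial F_{\Pen_d})$). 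For the near-pencil $\SPen_d$, the results recalled in \Cref{s:pencils} give $\partial F_\A \cong \Sigma_{d-2}\times S^1$, whose normal form is the single vertex with genus $d-2$ and Euler number $0$. For $\DPen_{a,b}$, \Cref{s:double} produces a normal form whose regular node graph is $K_{a,b}$.

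For the reverse implications in items~\ref{it:main_empty}--\ref{it:main_double}, I would argue by elimination: given that the list of exceptional arrangements (the Lemma in \Cref{s:arrangements}) together with the non-exceptional case~\ref{it:main_nonexc} exhausts all arrangements, it suffices to check that the four special normal forms listed are pairwise distinct and are each distinct from any normal form that "determines a poset" in the sense of \Cref{s:statement}. The empty graph, a disjoint union of trivial vertices, a single positive-genus vertex, and a graph with regular node graph $K_{a,b}$ are visibly non-isomorphic to one another (distinguished by number of components, genus, and valencies), and none of them determines a poset: the empty graph and the disjoint trivial vertices have no nodes; the genus vertex is a single node with $\delta_v = 0$, violating the bipartite/independent-set structure; and $K_{a,b}$ itself fails to admit a \emph{unique} partition into maximal independent sets with the required Euler-number decorations (this is exactly the phenomenon flagged after the definition of "determines a poset"). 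Hence the normal form of $\partial F_\A$ lands in the exceptional list if and only if $\A$ is exceptional, and within that list the graph pins down which family and which parameters.

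For item~\ref{it:main_nonexc}, the forward direction is the heart of the paper and is where I expect the real work to lie. Assume $\A$ is non-exceptional. In \Cref{s:G} one applies part of the normalization process to $\GNSz$ to obtain a graph $G$ which, for non-exceptional $\A$, is already in normal form, so $\Gn = G$. Then I would identify the regular (and special) nodes of $G$ with the combinatorial data of $\A$: the nodes of $\Nsp$ coming from lines form the set $\Nd_1$, those coming from intersection points form $\Nd_2$, an edge of $\Nsp$ corresponds to a line through a point, so $(\Nsp,\Nd_1)$ corresponds to $(P_\A,<)$; the Euler number $-1$ and genus $0$ on line-nodes, the absence of special nodes in $\Nd_1$, and the continued-fraction condition $[h_1,\ldots,h_r] = |\Nd_1|/\delta_w = d/n_j$ on the chain joining a line-node to a point-node all follow from the explicit multiplicities (via \eqref{eq:multiplicity}) computed in \Cref{s:nem_szil} and the $\gcd$-quantities $c_j = \gcd(d,n_j)$ from \Cref{s:arrangements}. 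For the reverse direction, if $\Gn$ determines some rank-$2$ poset $(P,<)$, then $\A$ cannot be exceptional (its normal form is not among the four exceptional graphs, by the distinctness argument above), hence by the forward direction $\Gn$ determines $P_\A$; since a plumbing graph determines \emph{at most one} poset (the partition into maximal independent sets being required to be unique), $P = P_\A$.

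The main obstacle is the forward direction of~\ref{it:main_nonexc}: verifying that the non-exceptional hypothesis (every line carries at least three intersection points) is exactly what guarantees each line-vertex and point-vertex of $\GNSz$ survives as a genuine node after normalization --- so that no collapsing of the combinatorial structure occurs --- and that the normalization moves do not create spurious special nodes or alter the continued fractions along the connecting chains. This requires the careful bookkeeping of Sections~\ref{s:nem_szil} and~\ref{s:G}, and the delicate string analysis for the borderline $\DPen_{a,b}$ case in \Cref{s:double} showing precisely where the dichotomy between "determines a poset" and "complete bipartite regular node graph" occurs.
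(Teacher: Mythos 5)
Your proposal matches the paper's own proof in both structure and substance: the paper likewise establishes the forward implication for each of the five cases separately (blocks \ref{block:pencil} and \ref{block:semipencil} for pencils and near-pencils, Lemma \ref{lem:double_pencil} for $\DPen_{a,b}$, Lemma \ref{lem:nonexc} for the non-exceptional case) and then obtains all the reverse implications by observing that the five graph-conditions are mutually exclusive, the only nontrivial exclusion being that of \ref{it:main_double} from \ref{it:main_nonexc}, handled by Lemma \ref{lem:not_bipartite}. Your outline of where the real work lies (the normal-form claim $\Gn = G$ of Lemma \ref{lem:double_normal}, the uniqueness of the admissible bipartition in Lemma \ref{lem:nonexc}, and the string analysis for $\DPen_{a,b}$) is exactly the paper's decomposition, so the proposal is correct and essentially identical in approach.
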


\begin{proof}
A plumbing graph in normal form can only belong to one of the
categories described in
\ref{it:main_empty},
\ref{it:main_pencil} or
\ref{it:main_semi}.
Furthermore, none of these graphs are complete bipartite graphs on
$a\geq b \geq 3$ vertices, nor do they determine a poset of rank $2$.
Finally, the categories
\ref{it:main_double},
\ref{it:main_nonexc}
do not intersect, by Lemma \ref{lem:not_bipartite}.
As a result, the theorem follows from
\ref{block:pencil} and \ref{block:semipencil},
and Lemmas \ref{lem:double_pencil}, \ref{lem:nonexc}
and \ref{lem:not_bipartite}.
\end{proof}

\begin{figure}[ht]
\begin{center}
\input{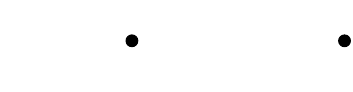_t}
\caption{The graph $\Gn$ in the case of pencils and near-pencils.}
\label{fig:sdpen}
\end{center}
\end{figure}

\begin{rem}
Although one might consider the near-pencil $\SPen_d$ as a
connected double pencil $\DPen_{a,b}$ with parameters $a=d-1$ and $b=2$,
Neumann's normalization algorithm must be carried out further for
the near-pencil than
the steps described in \Cref{s:double}. In this respect, the two cases
are essentially different.
\end{rem}

\section{The algorithm of N\'emethi and Szilárd} \label{s:nem_szil}

\begin{block}
Take $f=\prod_{\ell_i\in\A}\alpha_i$ the homogeneous polynomial defining the line arrangement $\A$ as in \ref{block:Xarrangements}, where each linear form $\alpha_i$ defines the line $\ell_i$. Denote by $X_i \subset \C^3$ the hyperplane
defined by $\alpha_i = 0$. One has that $X_\A = \cup_i X_i$.
For each point $p_j$, denote by $\Sigma_j$ the corresponding line in $\C^3$,
and by $\Sigma$ the union of these lines. This way, $(X_\A,0)$ is the germ
of a non-isolated hypersurface singularity, with
one dimensional singular locus $(\Sigma,0)$.
By \cite{Nem_Szil}, the boundary of the Milnor fiber
\begin{equation} \label{eq:boundary}
  \partial F = f^{-1}(\eta) \cap S^5_\epsilon
\end{equation}
is a plumbed 3-manifold, represented by a plumbing graph, which we
call $\GNSz$.
Denote by $g \in \C[x,y,z]$ a linear function, defining a line
$\ell_0$ in the plane
which does not pass through any of the singular points $p_1,\ldots, p_c$ of $\A$, and
let $Y$ be the hyperplane in $\C^3$ defined by $g=0$.
This way, $f,g$ define an isolated complete intersection consisting of
$d$ lines in $\C^3$. 
Blowing up the origin in $\C^3$ separates the lines $\Sigma_j$. Denote by
\[
  r:V\to\C^3
\]
the map obtained by first blowing up the origin in $\C^3$, and then
blowing up each of the strict transforms of $\Sigma_j$. This second
step is taken in order to satisfy \emph{Assumption A}, described
in \cite[6.3]{Nem_Szil}.
Setting
\[
  \Dc = \overline{ r^{-1}(X_\A \setminus Y) },\qquad
  \Dd = \overline{ r^{-1}(Y \setminus X_\A) },\qquad
  \Dz = r^{-1}(X_\A \cap Y),
\]
the \emph{curve configuration} is
\[
  \Cc = (\Dc \cap \Dz) \cup (\Dc \cap \Dd).
\]
This is a union of curves in $V$, whose dual graph is denoted
$\GCc$. The set of vertices of this graph is $\VC$, with $v \in \VC$
corresponding to a component of $\Cc$. We write $\V = \W \amalg \Ar$,
with $v \in \W$ if and only if the corresponding component is compact.
The vertices in $\W$ are drawn as regular dots, whereas the ones in $\Ar$
are drawn as arrowheads.
Two vertices in $\GCc$ are joined by an edge if the corresponding
components intersect.
\begin{itemize}
\item
For each of the lines $\ell_i$, denote by $\tX_i$ the strict transform
of $X_i$ in $V$. Thus, we have a vertex $v^\Cc_i$ in $\GCc$ corresponding
to the curve $\tX_i \cap r^{-1}(0)$.
\item
For each of the points $p_j$, denote by $\tS_j$ the strict transform
$\overline{ r^{-1}(\Sigma_j \setminus \{0\}) }$ of $\Sigma_j$ in $V$.
Thus, we have a vertex $w_j^\Cc \in \W$ in $\GCc$, corresponding to the
compact curve $\tS_j \cap r^{-1}(0)$.
\item
The vertices $v^\Cc_i$ and $w^\Cc_j$ are joined by an edge if and only
if $p_j \in \ell_i$.
\item
Every line $\ell_i$ intersects the line $\ell_0$ in a single point,
corresponding to a line in $\C^3$. The strict transform of this line
corresponds to an arrowhead vertex $a^\Cc_i$ in $\GCc$, which
is joined by an edge with $v^\Cc_i$.
\end{itemize}
Furthermore, the graph $\GCc$ is decorated with the following
numerical data
\begin{itemize}
\item
The vertex $v^\Cc_i$ is decorated by $(1,d,1)$.
\item
The vertex $w^\Cc_j$ is decorated by $(n_j,d,1)$.
\item
If $p_j \in \ell_i$, then the corresponding edge joining the vertices
$v^\Cc_i$ and $w^\Cc_j$ is decorated by $2$.
\item
An arrowhead $a^\Cc_i$ is decorated by $(1,0,1)$, and the edge joining
it with $v^\Cc_i$ is decorated by $1$.
\item
All compact curves in $\Cc$ have genus zero, so we decorate the
corresponding vertices with $[0]$.
\end{itemize}
\end{block}

\begin{figure}[ht]
\begin{center}
\input{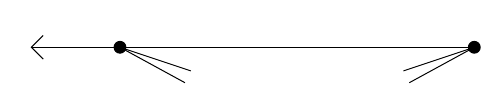_t}
\caption{Part of the graph $\GCc$ with decorations.}
\label{fig:conf_graph}
\end{center}
\end{figure}

\begin{block}
The \emph{Main Algorithm} of \cite[Chapter 10]{Nem_Szil} takes as input the graph
$\GCc$, and returns a plumbing graph for the boundary of the
Milnor fiber \cref{eq:boundary}.
We start by making two remarks:
\begin{itemize}
\item
The middle decorations on any non-arrowhead vertex is $d \neq 0$.
As a result, \emph{Assumption B} \cite[10.1.3]{Nem_Szil} is satisfied.
Assumption A is also satisfied, as we have seen, and so
no further modification of the the resolution $r$ is needed
before starting the Main Algorithm.
\item
The covering data in step two of the Main Algorithm
\cite[10.2.5]{Nem_Szil}
is trivial in all cases, i.e. we have
$\mathfrak{n}_w = \mathfrak{n}_e = 1$ for any vertex $w$ and edge $e$
in the graph $\GCc$. As a result, we present here the output of the\
Main Algorithm without mentioning these data.
\end{itemize}
Denote the output of the Main Algorithm by $\GNSz$.
This is a plumbing graph with a multiplicity system. 
More precisely, $\GNSz$ is a graph with vertex set
$\VNSz = \WNSz \amalg \ArNSz$, where $\ArNSz$ are arrowheads.
Vertices of the form $a^\Cc_i$, $v^\Cc_i$, $w^\Cc_j$ induce
similar ones $\arNSz_i$, $\vNSz_i$, $\wNSz_j$ in the graph $\GNSz$.
The non-arrowhead vertices are decorated with Euler numbers
$\eNSz_i$ and $\eNSz_j$ and genera $\gNSz_i$ and $\gNSz_j$.
Following the Main Algorithm, one obtains formulas for the genera
\[
  \gNSz_i = 0,\qquad
  \gNSz_j = \frac{(c_j - 1)(n_j - 1)}{2},
\]
associated with any line $\ell_i$ and any intersection point $p_j$.
An arrow-head has multiplicity $1$,
a vertex $\vNSz_i$ also has multiplicity $1$,
and a vertex $\wNSz_j$ has multiplicity
\[
  m_j = \frac{n_j}{c_j}.
\]

Any edge labelled with $1$ joining $a^\Cc_i$ and $v^\Cc_i$ induces
an edge labelled with $+1$ joining $\arNSz_i$ and $\vNSz_i$.
Any edge labelled with $2$ joining vertices $v^\Cc_i$ and $w^\Cc_j$
induces a string of type $\Strm{1,n_j;d}{0,0;1}$.
This is a string with Euler numbers $k_1,\ldots,k_s \geq 2$
and multiplicities $m_1,\ldots,m_s$.
The numbers $k_1,\ldots,k_s$ are determined as a
\emph{negative continued fraction expansion}
\[
  \frac{d}{\lambda}
  =
  [k_1,k_2,\ldots,k_s]
  =
  k_1 - \frac{1}{k_2 - \frac{1}{\ddots - \frac{1}{k_s}}}
\]
where $0 \leq \lambda < d/c_j$ is determined by
\[
  n_j + \lambda = m_1 d, \qquad m_1 \in \Z,
\]
in other words, $m = 1$ and $\lambda = d-n_j$. Therefore, we have
\[
  \frac{d}{d-n_j}
  =
  [k_1,k_2,\ldots,k_s],
\]
with the exception when $d = n_j$, in which case we join the two vertices
with a single negative edge.
Furthermore, if we set $m_0 = 1$ and $m_{s+1} = m_j$, the multiplicities
satisfy
\[
  k_l m_l - m_{l-1} - m_{l+1} = 0,\qquad l=1,\ldots,s.
\]
\end{block}

\begin{figure}[ht]
\begin{center}
\input{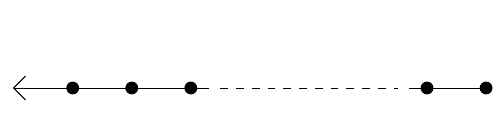_t}
\caption{The string $\Strm{1,n_j;d}{0,0;1}$ in $\GNSz$.}
\label{fig:strn}
\end{center}
\end{figure}

\begin{defi}
For every intersection point $p_j$, with the notation introduced above,
set $m'_j = m_s$, the multiplicity on any neighbor of $w_j$ in $\GNSz$. 
\end{defi}

\begin{lem} \label{lem:eNSz}
The Euler numbers are determined by
\[
  \eNSz_i = \bar{n}_i - 1,\qquad
  \eNSz_j = m'_j c_j.
\]
\end{lem}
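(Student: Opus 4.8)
The plan is to compute the two Euler numbers separately, using in both cases the fact that $\GNSz$ carries a multiplicity system, i.e. equation~\eqref{eq:multiplicity} holds at every non-arrowhead vertex. First I would treat $\eNSz_i$, the Euler number at the vertex $\vNSz_i$ corresponding to the line $\ell_i$. This vertex has multiplicity $1$, and its neighbors in $\GNSz$ are: the arrowhead $\arNSz_i$ (joined by an edge of signature $+1$, multiplicity $1$), and, for each of the $\bar n_i$ intersection points $p_j$ lying on $\ell_i$, the first vertex of the string $\Strm{1,n_j;d}{0,0;1}$, which has multiplicity $m_1 = 1$ as recorded in the excerpt (the relation $n_j + \lambda = m_1 d$ forces $m_1 = 1$). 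All these string-edges have signature $-1$ (negative edges, as the strings have Euler numbers $\geq 2$). Plugging into $e_v m_v + \sum_w \varepsilon_{vw} m_w = 0$ with $m_v = 1$ gives $\eNSz_i \cdot 1 + (+1)\cdot 1 + \sum_{j : p_j \in \ell_i} (-1)\cdot 1 = 0$, i.e. $\eNSz_i = \bar n_i - 1$, as claimed. One edge case to dispatch: if $d = n_j$ for some $p_j \in \ell_i$, the string degenerates to a single negative edge directly to $\wNSz_j$, whose multiplicity is $m_j = n_j/c_j = d/d = 1$ when $n_j = d$; so the contribution $-m_j = -1$ is unchanged, and the formula still holds.

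Next I would treat $\eNSz_j$, the Euler number at $\wNSz_j$. This vertex has multiplicity $m_j = n_j/c_j$ and genus $\gNSz_j = (c_j-1)(n_j-1)/2$; its neighbors are exactly the $n_j$ endpoints of the $n_j$ strings coming from the lines through $p_j$, each such endpoint being the last vertex of its string, hence having multiplicity $m_s = m'_j$ by definition, and each connecting edge having signature $-1$. Equation~\eqref{eq:multiplicity} at $\wNSz_j$ reads $\eNSz_j \cdot m_j + \sum_{i : p_j \in \ell_i}(-1)\cdot m'_j = 0$, i.e. $\eNSz_j\, m_j = n_j\, m'_j$. Since $m_j = n_j/c_j$, this rearranges to $\eNSz_j = n_j m'_j / m_j = m'_j c_j$, which is the second formula. (In the degenerate case $n_j = d$ there is a single negative edge to each $\wNSz_j$ from the $\vNSz_i$, and one checks the same bookkeeping; but note $c_j = \gcd(d,n_j) = d$ and $m'_j$ is then the multiplicity on the $\vNSz_i$ side, namely $1$, consistent with $\eNSz_j = m'_j c_j = d$.)

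A subtlety I should be careful about is that the multiplicity system relation~\eqref{eq:multiplicity} is being applied to the output $\GNSz$ of the Main Algorithm, so I must make sure I am using the multiplicities and edge signatures exactly as the algorithm produces them — in particular that every string-edge incident to a node is negative and that the endpoint multiplicities are $m_1 = 1$ (near the $\vNSz_i$ side) and $m_s = m'_j$ (near the $\wNSz_j$ side). These facts are exactly what the description of the string $\Strm{1,n_j;d}{0,0;1}$ in the excerpt records, together with the boundary conditions $m_0 = 1$, $m_{s+1} = m_j$ and the recursion $k_l m_l - m_{l-1} - m_{l+1} = 0$. The main (and only real) obstacle is therefore purely organizational: correctly enumerating the neighbors of each node and confirming the signature conventions; once that is done, both formulas drop out of a single application of~\eqref{eq:multiplicity} at each vertex type. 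I would also double-check consistency by noting that the same recursion $k_l m_l = m_{l-1} + m_{l+1}$ holds along the whole string, so that~\eqref{eq:multiplicity} is automatically satisfied at interior string vertices, confirming that the only genuinely new information is what happens at the two kinds of nodes.
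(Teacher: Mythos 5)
Your proposal is correct and follows exactly the paper's argument: a single application of the multiplicity relation \eqref{eq:multiplicity} at $\vNSz_i$ (multiplicity $1$, one positive arrowhead edge, $\bar n_i$ negative string edges with endpoint multiplicity $1$) and at $\wNSz_j$ (multiplicity $m_j=n_j/c_j$, $n_j$ negative edges with endpoint multiplicity $m'_j$). Your extra check of the degenerate case $n_j=d$ and of consistency at interior string vertices goes slightly beyond what the paper writes, but does not change the approach.
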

\begin{proof}
The vertex $\vNSz_i$ has multiplicity $1$, and $\bar{n}_i+1$ neighbors, each
with multiplicity $1$, $\bar{n}_i$ of which is joined by a negative edge.
As a result, by \cref{eq:multiplicity}, we find
\[
  \eNSz_i - \bar{n}_i + 1 = 0,
\]
proving the first equation.
Applying the same formula for the vertex $\wNSz_j$, we find
\[
  \eNSz_j m_j - n_j m'_j = 0.
\]
Since $m_j = n_j / c_j$, the result follows.
\end{proof}

\section{Normal forms of pencils and near-pencils} \label{s:pencils}

\begin{block}
In this section, we describe explicitly the normal form of plumbing graphs
for $\partial F_\A$ in the case of pencils and near-pencils.
These results can be found in \cite{Nem_Szil,sugawara1}.
\end{block}

\begin{block} \label{block:pencil}
Consider the pencil $\Pen_d$. There is a single intersection point
$p_j$ (with $j=d+1$), with $n_j = d$. As a result, we find
$c_j = d$ and $m_j = 1$, and so
\begin{equation} \label{eq:genus_pencil}
  g_j = \frac{(d-2)(d-1)}{2}.
\end{equation}
We have vertices $\vNSz_1,\ldots,\vNSz_d$ in the
graph $\GNSz$, each with Euler number and genus zero, and each
of them joined with the vertex $\wNSz_j$ by a simple edge.
By choosing one of the vertices $\vNSz_i$, we can apply R6, splitting,
to the graph $\GNSz$. Each of the components $\Gamma_1,\ldots,\Gamma_s$
in \cite[p. 305-306]{Neu_plumb}
consists of one of the vertices $\vNSz_i$, and there are $d-1$ of them. 
The numbers $k_1,\ldots,k_s$ are all 1. Using \cref{eq:genus_pencil},
we find that $\GNSz$ is equivalent to the disjoint union of
\[
  s + 2g_j = d-1 + 2\frac{(d-1)(d-2)}{2} = (d-1)^2
\]
vertices with Euler number and genus zero.
\end{block}

\begin{block} \label{block:semipencil}
Next, consider the near-pencil $\SPen_d$, with $d\geq 3$ lines.
If $d > 3$, then there is a unique line $\ell_1$ with $n_1 = d-1$
and a unique point $p_{d+1}$ with $n_{d+1} = d-1$. The rest of the
lines contain only two intersection points, and the rest of the points
are double. Thus, the graph $\GNSz$ consists of the vertices
$\vNSz_1$ and $\wNSz_{d+1}$, joined by $d-1$ bamboos of the form
seen in \cref{fig:semi_NSz}. 
\end{block}

\begin{figure}[ht]
\begin{center}
\input{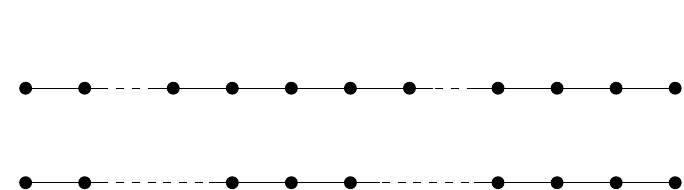_t}
\caption{We have $d-1$ bamboos of the above form, for
$i=2,\ldots,d$, and $j = d+i$. The first string shows the case
when $d$ is odd, whereas the second shows the case when $d$ is even.
All edges in these graphs are negative.}
\label{fig:semi_NSz}
\end{center}
\end{figure}

Applying blow-downs and zero-chain absorptions to these strings,
we find the graphs seen in \cref{fig:semi_finish}.

\begin{figure}[ht]
\begin{center}
\input{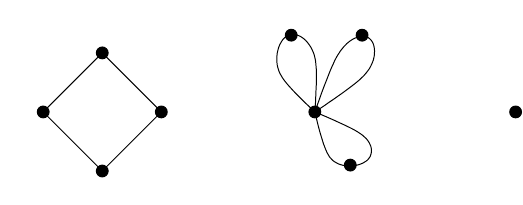_t}
\caption{To the left, the vertices with Euler number $\pm 1$
are joined by $d-1$ bamboos, each having one vertex with Euler number
$0$, and one negative edge. Applying one zero-chain absorption, we
get a graph with $d-2$ bamboos joining a $0$-vertex with itself.
Now, apply an oriented handle absorption, R5, to each of these loops,
ending with a single vertex with Euler number $0$ and genus $d-1$.}
\label{fig:semi_finish}
\end{center}
\end{figure}

\section{An (almost) minimal graph for arrangements} \label{s:G}

\begin{block}
In this section, we define a graph $G$, associated with a line arrangement
$\A$, obtained by following certain parts of Neumann's algorithm
for finding a normal plumbing graph, which we denote by $\Gn$.
We have described the normal forms of $\partial F_\A$ in the case
of a pencil in \cref{s:pencils}, so we will assume that $\A$
is not a pencil. In particular, we have $n_j < d$ for all
intersection points $p_j$.
\end{block}

\begin{block}
We describe a plumbing graph explicitly from the poset $P_\A$.
To each line $\ell_i$ we associate a vertex $v_i$, with Euler number and
genus
\[
  e_i = -1,\qquad g_i = 0.
\]
To each intersection point $p_j$, we associate a vertex $w_j$.
If $n_j>2$, then its Euler number and genus are
\begin{equation} \label{eq:egj}
  e_j = \eNSz_j - n_j,\qquad
  g_j = \gNSz_j = \frac{(c_j-1)(n_j-2)}{2},
\end{equation}
and for double points, $n_j = 2$, we set
\[
  e_j = -d,\qquad g_j = 0.
\]
If $p_j \in \ell_i$ and $n_j > 2$,
then we join the vertices $v_i, w_j$ by a string
as in \cref{fig:Gbamboo}.
All the edges in this string are positive, and it has
negative Euler numbers $h_1,\ldots,h_r$ determined by
\[
  \frac{d}{n_j} = [h_1,h_2,\ldots,h_r].
\]
All edges in on this string have a positive sign.

\begin{figure}[ht]
\begin{center}
\input{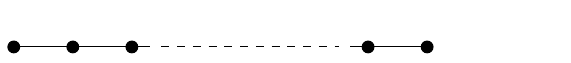_t}
\caption{A string in the graph $G$ for a multiple point
$p_j$, $n_j > 2$. Observe that all edges have a positive sign.}
\label{fig:Gbamboo}
\end{center}
\end{figure}

If $p_j$ is a double point, the intersection point of $\ell_i$ and $\ell_{i'}$,
then we join $w_j$ with $v_i$ and $v_{i'}$ by one edge each,
one positive, and one negative. Note that it is not important
which one has which sign, as they can be swapped by applying
the operation R0 to the vertex $w_j$.
\end{block}

\begin{lem}
The graphs $\GNSz$ and $G$ are equivalent plumbing graphs, i.e. $G$ is
obtained by applying a sequence of operations R0-8 to $\GNSz$.
\end{lem}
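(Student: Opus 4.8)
The plan is to produce $G$ from $\GNSz$ by an explicit sequence of Neumann moves --- only R0 (sign changes), R1 (blow-downs and blow-ups) and R3 ($0$-chain absorptions) are needed --- and then to verify, decoration by decoration, that the output is the graph described in \Cref{s:G}. Each of these moves is \emph{local}, so I would process the chains of $\GNSz$ one at a time, recording in each case the net change induced on the Euler numbers of the two nodes at the ends of the chain. None of the moves used changes a genus, so the genera of $\vNSz_i$ and $\wNSz_j$, and likewise the arrowheads $\arNSz_i$ with their $+1$-edges, are untouched throughout; I suppress them in what follows.

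The core step is the treatment of a chain attached to a multiple point. Fix an incidence $p_j\in\ell_i$ with $n_j>2$. The corresponding chain of $\GNSz$ carries Euler numbers $-k_1,\dots,-k_s$ with $\frac{d}{d-n_j}=[k_1,\dots,k_s]$, whereas the chain prescribed for $G$ carries $-h_1,\dots,-h_r$ with $\frac{d}{n_j}=[h_1,\dots,h_r]$. These two linear chains are interchanged by the standard short sequence of blow-ups, blow-downs and $0$-chain absorptions that realises the duality between the negative continued fraction expansions of $d/n_j$ and $d/(d-n_j)$ (compare \cite{Neu_plumb}); running the multiplicity system of $\GNSz$ in parallel via \cref{eq:multiplicity} makes it easy to keep track of the Euler numbers. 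A direct computation then shows that the net effect of this sequence on the two end-vertices is to lower each of their Euler numbers by exactly $1$. Summing over the $\bar{n}_i$ chains that leave $\vNSz_i$, its Euler number passes from $\eNSz_i=\bar{n}_i-1$ (\Cref{lem:eNSz}) to $-1=e_i$; summing over the $n_j$ chains that reach $\wNSz_j$, its Euler number passes from $\eNSz_j$ to $\eNSz_j-n_j=e_j$, matching \cref{eq:egj}.

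For a double point $p_j=\ell_i\cap\ell_{i'}$ ($n_j=2$) the situation is simpler: the whole chain of $\GNSz$ joining $\vNSz_i$ and $\vNSz_{i'}$ through the genus-$0$ vertex $\wNSz_j$ collapses, under a short sequence of moves R0, R1 and R3, onto a single genus-$0$ vertex joined to $v_i$ and $v_{i'}$. A computation with the expansion $\frac{d}{d-2}=[k_1,\dots,k_s]$ and the multiplicity relations $k_lm_l=m_{l-1}+m_{l+1}$ identifies the surviving Euler number as $-d$, with one of the two remaining edges positive and the other negative; this is exactly the local configuration assigned to $w_j$ in \Cref{s:G}, and the effect on $\vNSz_i$ is again a drop of $1$. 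Assembling these local transformations over all incidences of $P_\A$, the underlying graph becomes the bipartite incidence graph of $P_\A$ with the prescribed strings inserted, and all decorations come out as in \Cref{s:G}; that is, the result of the moves is $G$.

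The main obstacle is the bookkeeping inside these local computations. One has to check that the blow-ups and blow-downs used for different chains are genuinely independent, so that the chain-by-chain organisation is legitimate, and to keep the signs on all edges straight through every application of R1, since blowing down a bivalent $(-1)$-vertex reverses a sign and the string-edges of $\GNSz$ already carry a nontrivial sign pattern (see the proof of \Cref{lem:eNSz}). With those points in hand, confirming that the Euler numbers, genera and combinatorial shape come out exactly as claimed in \Cref{s:G} --- and that no further normalisation step is forced at this stage --- is routine.
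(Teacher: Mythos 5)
Your strategy is correct and coincides with the paper's own proof: both proceed chain by chain using only R0, R1 and R3, convert each string via the duality between the negative continued fraction expansions of $d/(d-n_j)$ and $d/n_j$ (the paper makes this explicit through negative blow-ups at every edge followed by $0$-chain absorptions, zero-extrusions and blow-downs, citing \cite{P-P_cfrac}), record a net drop of $1$ in the Euler number at each end-vertex per incident chain, and collapse the double-point configurations to a single $-d$ vertex with one positive and one negative edge. The only difference is one of completeness, not of approach: the local computations you defer as ``standard'' are exactly the content the paper writes out.
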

\begin{proof}
The graph $G$ is obtained by applying the following sequence of 
operations
On one hand, on each string joining two vertices $\vNSz_i$ and $\wNSz_j$
in $\GNSz$ with $n_j > 2$:
\begin{enumerate}
\item
Perform a negative blow-up at every edge,
the inverse of R1 with $\epsilon = -1$.
This lowers the Euler number at every vertex in $\GNSz$ by its valency.
\item
Every vertex in the string which has Euler number $2$
in $\GNSz$ now has Euler number $0$. Perform a $0$-chain absorption,
 R3, on it.
\item
Any vertex on the string which has Euler number $e>2$
has Euler number $e-2$ in the resulting graph, so far. Perform
$e-3$ zero-extrusions on it, i.e. the inverse of R3, thus replacing
it with a substring with alternating Euler numbers $1$ and $0$.
\item
Blow down the vertices introduced in the last step which have Euler number $1$.
\item
Observe that all but one negative sign on each original string is deleted,
leaving precisely one, which we can take as the edge next to a vertex
$v_i$.
\end{enumerate}

\begin{figure}[ht]
\begin{center}
\input{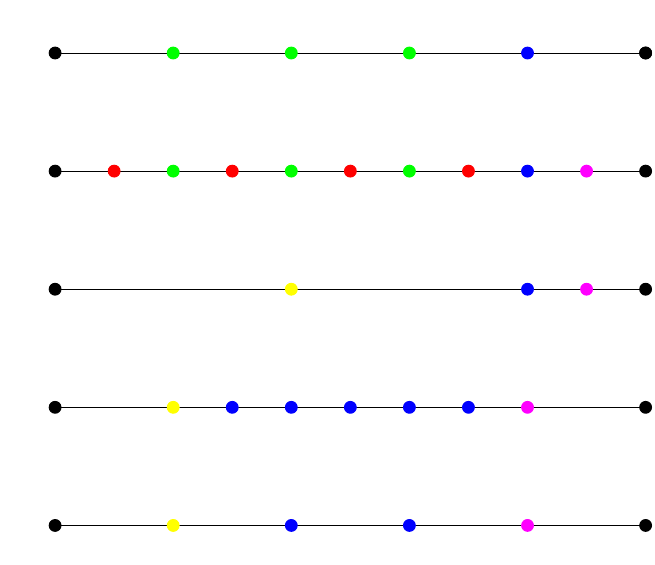_t}
\caption{Normalization of a string with positive Euler numbers.}
\label{fig:pmstrings}
\end{center}
\end{figure}

By this procedure, a maximal subsequence $k_{l+1} = \ldots = k_{l+m} = 2$
of $2$'s of  length $m$
in $k_1,\ldots,k_s$ gets turned into a single $-m-2$ if the subsequence
is at either end, and a $-m-3$ if it is in the interior. Note that this also
applies to subsequences of length $m=0$, i.e. an edge connecting
$k_{i-1}$ and $k_{i+1}$ with both $k_i,k_{i+1} \geq 3$.
Similarly, any $k_l \geq 3$ is turned into a string of $-2$'s of length
$k_i-2$ if $i\neq 1,s$, and of length $k_i-3$ otherwise.
If the sequence of Euler numbers obtained this way is
$-h'_1,\ldots,-h'_{r'}$, then by \cite[Proposition 2.7]{P-P_cfrac}, we have
\[
  \lambda = [k_1,\ldots,k_s] \quad
  \Rightarrow \quad
  \frac{\lambda}{1-\lambda} = [h'_1, \ldots, h'_{r'}].
\]
that is,
\[
  [h'_1, \ldots, h'_{r'}]
  =
  \frac{d}{n_j}
  =
  [h_1, \ldots, h_r]
\]
and so the sequences $h'_1,\dots,h'_{r'}$ and $h_1,\ldots,h_r$ coincide.

On the other hand,
if $p_j$ is a double point in $\A$, contained in two lines $\ell_1,\ell_2$,
then the subgraph of $\GNSz$ on the vertices $\vNSz_1, \vNSz_2,\wNSz_j$
and the two strings connecting them is transformed by the
same operation to one of the strings on the left of \cref{fig:special_d},
depending on the parity of $d$.
In the even case, a zero-chain absorption yields the string on the right,
whereas in the odd case, two blow-downs and a zero-chain absorption
gives the same result.

\begin{figure}[ht]
\begin{center}
\input{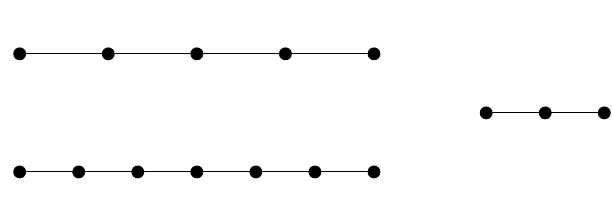_t}
\caption{Part of normalizing a double point.}
\label{fig:special_d}
\end{center}
\end{figure}

Now, observe that for every line $\ell_i$, all these operations have
lowered the Euler number of $\vNSz_i$ by one, precisely once for each
multiple point $p_j\in \ell_i$. Since $\eNSz_i = \bar{n}_i-1$, we find
that the vertex $v_i$ in $G$ has Euler number $-1$.
By a similar argument, we have a vertex $w_j$ in $G$ for every intersection point $p_j$ with $n_j > 2$, with Euler number and genus given by
\cref{eq:egj}.

Finally, apply operation R0 to every vertex $v_i$ in the resulting
graph, and we remove the one minus on any string joining $v_i$ and $w_j$
if $n_j > 2$. Each special node $w_j$ corresponding to a double point
$p_j$ is still adjacent to one negative edge.
\end{proof}

\begin{lem} \label{lem:double_normal}
If $\A$ is a non-exceptional arrangement, then the graph
$G$ is in normal form, as in \cite[\textsection 4]{Neu_plumb}.
\end{lem}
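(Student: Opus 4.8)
The plan is to verify directly that $G$ satisfies all the defining conditions of Neumann's normal form, as listed in \cite[\textsection 4]{Neu_plumb}. Recall that a plumbing graph is in normal form when: (N1) every Euler number is $\leq -2$, except possibly at vertices belonging to certain short exceptional configurations (single vertices of small genus, $0$-chains, etc.) which do not arise here; (N2) no edge connects two vertices of genus $0$ and valency $\leq 2$ in a way that would allow a blow-down or $0$-chain absorption; (N3) there are no $(-1)$-vertices that can be blown down, no oriented handles or $0$-chains to absorb, and more precisely the graph is reduced with respect to all of R1--R8. Since we have already established in the previous lemma that $G$ is equivalent to $\GNSz$, hence realizes $\partial F_\A$, it suffices to check that none of the reduction moves R1--R8 applies to $G$.

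First I would examine the Euler numbers of all vertices. The line vertices $v_i$ have Euler number $-1$; this looks dangerous, but a $(-1)$-vertex can only be blown down via R1 when it has valency $\leq 2$. Here I must use non-exceptionality: every line contains at least three intersection points, so $\bar n_i \geq 3$, and hence $v_i$ is adjacent to at least three strings, giving $\delta_{v_i} \geq 3$. Thus R1 does not apply at any $v_i$. For a multiple-point vertex $w_j$ with $n_j > 2$, I would check that $e_j = \eNSz_j - n_j \leq -2$; since $\eNSz_j = m'_j c_j \geq 1$ is of little help, the key computation is to show $m'_j c_j \leq n_j - 2$, i.e. $m'_j \leq (n_j-2)/c_j$ — this follows from the continued fraction data, as $m'_j = m_s$ is the last multiplicity in the string $\Strm{1,n_j;d}{0,0;1}$ and the recursion $k_l m_l = m_{l-1}+m_{l+1}$ together with $m_0 = 1$, $m_{s+1} = m_j = n_j/c_j$ controls its size. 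For double points $w_j$ we have $e_j = -d \leq -2$ since $d \geq 2$, and genus $0$; its two neighbours are line vertices with $\delta \geq 3$, so no $0$-chain absorption (R3) through $w_j$ is possible because R3 requires a $0$-weighted vertex, not a $(-d)$-weighted one. Similarly the interior vertices of each string have Euler number $-h_l \leq -2$ by construction.

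Next I would rule out the remaining reduction moves. For R3 ($0$-chain absorption) and R1 (blow-down): no vertex of $G$ has Euler number $0$ or $-1$-with-valency-$\leq 2$, by the Euler-number check above and the valency count at the $v_i$. For R5/R6 (oriented handle absorption and its relatives) one needs a genus-carrying vertex forming a loop or a specific local pattern; the genus vertices $w_j$ with $\gNSz_j > 0$ lie only on strings between two distinct line vertices and carry no loop, so these do not apply. The sign condition in normal form — that each connected component has an allowed distribution of edge signs, with at most the prescribed number of negative edges — is arranged by the last step of the construction of $G$: after the application of R0 at every $v_i$, each multiple-point string is entirely positive, and each double-point vertex retains exactly one negative edge, which is the permitted configuration. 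Finally, I would note that the special exceptional graphs excluded from normal form (a single low-genus vertex, a single $0$-vertex with a loop, short $0$-chains) are exactly the outputs in the (near-)pencil cases already handled in \Cref{s:pencils}, and non-exceptionality rules them out here.

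The main obstacle I anticipate is the Euler-number inequality $m'_j c_j \leq n_j - 2$ for multiple points, equivalently $e_j \leq -2$: this is the one place where a genuine arithmetic estimate on the continued-fraction expansion of $d/(d-n_j)$ (and the associated multiplicity sequence) is needed, rather than a formal check against the list of reduction moves. I would handle it by expressing $m'_j = m_s$ explicitly: from $d/(d-n_j) = [k_1,\ldots,k_s]$ one gets that $m_s$ equals the numerator of the "reversed" continued fraction truncation, and one shows $m_s < n_j/c_j \leq n_j/2$ unless $n_j$ is small, with the small cases ($n_j = 3$, say) checked by hand. Everything else is a bookkeeping verification that the local shape of $G$ at every vertex and every string avoids the patterns that R1--R8 would simplify.
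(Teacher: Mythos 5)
Your overall strategy---check the valencies and Euler numbers vertex by vertex and rule out each reduction move, using non-exceptionality only to get $\delta_{v_i}=\bar n_i\geq 3$ at the line vertices---is the same as the paper's, and most of your bookkeeping (interior string vertices have weight $-h_l\leq -2$, double-point vertices have weight $-d\leq -2$ and valency $2$, the $(-1)$-vertices $v_i$ are protected by their valency) is exactly what is needed.

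However, the step you single out as the ``main obstacle'' --- proving $e_j=\eNSz_j-n_j\leq -2$, i.e.\ $m'_jc_j\leq n_j-2$, for the multiple points --- is both unnecessary and, as stated, false. It is unnecessary because Neumann's condition N2 constrains the Euler numbers only of vertices lying on chains (genus $0$ and valency $\leq 2$); a vertex $w_j$ with $n_j>2$ has valency $n_j\geq 3$, so it is a node and its Euler number is unconstrained, and no move among R1--R8 blows down or absorbs a vertex of valency $\geq 3$ regardless of its weight. It is false because, for example, with $d=5$ and $n_j=3$ one has $c_j=1$, $m_j=3$, $5/2=[3,2]$, and the multiplicity recursion $k_lm_l=m_{l-1}+m_{l+1}$ with $m_0=1$, $m_3=3$ forces $m_1=1$, $m_2=2$, hence $m'_j=2$, $\eNSz_j=2$ and $e_j=-1>-2$; such a triple point occurs in non-exceptional arrangements of five lines. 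So if you pursue the arithmetic estimate you propose, the argument will break at precisely this point. The fix is simply to restrict the Euler-number check to the valency-$\leq 2$ vertices (string interiors and double-point vertices), where it holds by construction, and to observe, as the paper does, that $G$ has no vertices of valency $0$ or $1$, so the remaining normal-form conditions N3--N6 are vacuous or immediate.
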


\begin{proof}
We verify conditions N1-6 in \cite[\textsection 4]{Neu_plumb} for $G$.
By assumption, all vertices $v_i$ corresponding to lines $\ell_i$ have
valency $\delta_i \geq 3$.
Any vertex of valency $2$ in this
graph therefore either lies on a string corresponding to
$p_j \in \ell_i$ with $n_j > 2$, and so has Euler number $h_l \leq -2$,
or it is a special node corresponding to a double point, and has
Euler number $-d \leq -2$.
In particular, we have verified condition N2 for $G$.

Now, the graph $G$ has no vertex of valency $1$ or $0$.
With what we have seen so far, one verifies that the operations R1-8
cannot be applied to $G$, verifying condition N1.
Having no leaf, the graph satisfies N3-5.
Since $G$ is connected and does have a vertex of valency
$\geq 3$ (take any $v_i$) condition N6 is also satisfied.
\end{proof}

\section{Double connected pencils and complete bipartite graphs} \label{s:double}

Among the exceptional families of arrangements, the only remaining case is that of a double connected pencil: we assume that $\A = \DPen_{a,b}$ for some $a\geq b \geq 3$.

\begin{lem} \label{lem:double_pencil}
Let $\Gn$ be a plumbing graph in normal form for the boundary
$\partial F_\A$ of a Milnor fiber associated with a
connected double pencil $\DPen_{a,b}$ with $a\geq b \geq 3$,
and let $\Nreg$ be its regular node graph.
Then $\Nreg$ is a complete bipartite graph on $a$ and $b$ vertices.
\end{lem}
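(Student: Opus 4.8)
The plan is to start from the graph $G$ constructed in \Cref{s:G}, which by \Cref{lem:double_normal}'s argument (or rather, a direct analog, since $\DPen_{a,b}$ is exceptional) is \emph{almost} in normal form for $\DPen_{a,b}$, and then to carry out the remaining steps of Neumann's normalization explicitly, keeping track of which vertices survive as regular nodes. Recall that in $\DPen_{a,b}$ there are exactly two multiple points: $p_1$ with $n_1 = a$, lying on lines $\ell_1,\ldots,\ell_a$, and $p_2$ with $n_2 = b$, lying on lines $\ell_{a},\ell_{a+1},\ldots,\ell_{a+b-1}$ (the line $\ell_a$ is the one through both, which contains exactly these two intersection points). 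All other intersection points are double, arising as $\ell_i\cap\ell_{i'}$ with $i\le a-1$ and $i'\ge a+1$; there are $(a-1)(b-1)$ of them. So in $G$ each line $\ell_i$ gives a vertex $v_i$ with $e_i=-1$, $g_i=0$; the two multiple points give vertices $w_1,w_2$ connected to the relevant $v_i$ by the strings of \cref{fig:Gbamboo}; and each double point gives a special node connected to its two lines by one positive and one negative edge. Here $d = a+b-1$, so $c_1 = \gcd(d,a) = \gcd(b-1,a)$ and $c_2 = \gcd(d,b) = \gcd(a-1,b)$, and the string attached to $w_j$ realizes $d/n_j = [h_1,\ldots,h_r]$.

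Next I would observe that the vertices $v_i$ all have valency $2$ in $G$ except possibly for the "central" line $\ell_a$, so the $v_i$ are \emph{not} regular nodes of $G$ on their own — they sit on strings. The candidates for regular nodes are $w_1$, $w_2$, and the special nodes coming from double points. Concretely, each double point $p = \ell_i\cap\ell_{i'}$ produces a special node $w_p$ of valency $2$, one neighbor being $v_i$ (valency $2$), the other $v_{i'}$ (valency $2$); the string through $w_p$ runs from somewhere near $w_1$ (through $v_i$) to somewhere near $w_2$ (through $v_{i'}$). The idea is that after normalization each such string collapses, connecting (a node descended from) $w_1$ to (a node descended from) $w_2$. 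Thus the regular node graph should have on one side the descendants of $w_1$ and on the other the descendants of $w_2$, with edges being the $(a-1)(b-1)$ double-point strings — but a complete bipartite graph on $a$ and $b$ vertices has exactly $ab$ edges, so the arithmetic does not match unless $w_1$ splits into $a$ nodes and $w_2$ into $b$ nodes and the edge count becomes $ab$ after accounting for the extra strings through $v_a$. This is the crux: I expect $w_1$ has genus $g_1 = (c_1-1)(a-2)/2$ and valency $a$ (one string to each of $\ell_1,\ldots,\ell_a$), and applying R6 (splitting) repeatedly at these strings, together with R5 (oriented handle absorption) to absorb the genus, turns $w_1$ into $a$ separate nodes; symmetrically $w_2$ becomes $b$ nodes. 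The line $\ell_a$ is then shared, and the bookkeeping of the strings through $v_a$ — which links one node of the $w_1$-side to one node of the $w_2$-side — is what produces exactly the complete bipartite pattern $K_{a,b}$.

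So the key steps, in order, are: (1) write down $G$ for $\DPen_{a,b}$ explicitly, identifying $w_1,w_2$, their genera and valencies, the double-point special nodes, and all the strings; (2) verify that all $v_i$ and all special nodes have valency $2$ and Euler numbers $\le -2$, so that the only obstructions to normal form are the presence of genus at $w_1,w_2$ and (if genus is $0$) possible splitting; (3) apply R6 at $w_1$ and at $w_2$ to split each into several components, using \cite[p.\ 305--306]{Neu_plumb} to count the components — this should give $a$ pieces from $w_1$ and $b$ pieces from $w_2$ once the contribution $2g_j$ of the genus is added to the valency contribution, mirroring the pencil computation in \ref{block:pencil}; (4) check that after splitting, each of the $(a-1)(b-1)$ double-point strings plus the strings running through $v_a$ together realize exactly the edges of $K_{a,b}$, and that no further R1--R8 operation applies, so the resulting graph is $\Gn$ and its regular node graph is $K_{a,b}$; (5) confirm via \Cref{lem:eNSz} and \cref{eq:egj} that all relevant vertices indeed have valency $\ge 3$ or nonzero genus (so they are genuine regular nodes) and that the vertices of valency $\le 2$ on the connecting strings are not. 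The main obstacle will be step (3)--(4): getting the splitting of $w_1$ and $w_2$ to produce precisely $a$ and $b$ nodes and then matching the edge count of the resulting bipartite graph to $ab$ rather than $(a-1)(b-1)$ — this requires carefully tracking how the central line $\ell_a$ and its two incident strings behave under splitting, and checking that the genus-absorption via R5 does not introduce valency-$1$ artifacts that would need further blow-downs and alter the node count.
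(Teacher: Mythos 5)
There is a genuine gap, and it begins with a miscount that derails the whole strategy. In the graph $G$ of \Cref{s:G} for $\DPen_{a,b}$, a non-central line $\ell_i$ through $p_1$ contains $p_1$ together with $b-1$ double points (one for each non-central line through $p_2$), so $v_i$ has valency $b\geq 3$ in $G$ and is already a regular node; symmetrically, a non-central line through $p_2$ gives a regular node of valency $a\geq 3$. Only the central line has valency $2$. Your claim that the $v_i$ all have valency $2$ and ``sit on strings'' is therefore backwards, and as a consequence there is no need to split $w_1$ into $a$ nodes and $w_2$ into $b$ nodes: the bipartition of the regular node graph of $G$ is $\{w_1\}\cup\{v_i : \ell_i\not\ni p_1\}$ (of size $b$) against $\{w_2\}\cup\{v_i : \ell_i\not\ni p_2\}$ (of size $a$), and the edges are the $(a-1)(b-1)$ double-point strings, the $(a-1)+(b-1)$ strings to the two pencil centers, and one string joining $w_1$ and $w_2$ through the central line's vertex, for a total of $ab$ --- exactly $K_{a,b}$, with no splitting at all. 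Moreover, step (3) of your plan is not a legitimate sequence of calculus moves here: R6 requires a genus-zero vertex with Euler number $0$, and R5 absorbs a loop at a $0$-vertex; neither applies to $w_1$ or $w_2$, whose Euler numbers are $e_j=\eNSz_j-n_j$ and whose genera may be positive.

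The second, more serious, omission is that you do not address the one real difficulty. Since the regular node graph of $G$ is already $K_{a,b}$, the only remaining issue is that $G$ is not in normal form: the central line's vertex has Euler number $-1$ and valency $2$, so it must be blown down, and the resulting cascade of blow-downs and zero-chain absorptions along the string joining $w_1$ and $w_2$ could a priori terminate in a zero-absorption that merges $w_1$ and $w_2$ into a single vertex, destroying the claimed bipartite structure of $\Nreg(\Gn)$. Ruling this out is the actual content of the paper's proof: one shows that the Seifert fibres of the pieces corresponding to $w_1$ and $w_2$ are linearly independent in the first homology of the intermediate $T^2\times I$ piece, via a negative continued-fraction computation whose determinant equals $-d\neq 0$. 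Without some argument of this kind, the proof is incomplete even after the setup is corrected.
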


\begin{proof}
Let $G$ be the graph associated to $\A=\DPen_{a,b}$, following the construction of \Cref{s:G}. Assume that
$p_j$ and $p_k$ are the two centers of the pencils in $\DPen_{a,b}$,
so that $n_j = a$, $n_k = b$.
Assume also that $\ell_1$ is the line passing through $p_j,p_k$.
The node graph associated with $G$ is a complete bipartite graph
on $a,b$ vertices.
Indeed, we can divide the set of regular nodes $\Nd^G$
into two independent sets by taking
\begin{itemize}
\item
$\Nd_1^G$ as the set containing $w_j$,
as well as $v_i$ for $\ell_i \not\ni p_j$,
\item
and $\Nd_2^G$ as the set containing $w_k$, as well as $v_i$ for
$\ell_i \not\ni p_k$.
\end{itemize}
These are independent sets, showing that $\Nreg$ is bipartite.
It is complete, since
\begin{itemize}
\item
If $\ell_i \not\ni p_j$, then $\ell_i \ni p_k$, so $\Nreg$ has an edge
joining $v_i$ and $w_j$.
\item
Similarly, 
If $\ell_i \not\ni p_k$, then $\ell_i \ni p_j$, so $\Nreg$ has an edge
joining $v_i$ and $w_k$.
\item
If $\ell_i \not\ni p_j$ and $\ell_{i'} \not\ni p_k$, then the lines
$\ell_i$ and $\ell_{i'}$ intersect in a double point, and so $v_i$
and $v_{i'}$ are joined by a string in $G$, i.e. they are joined by an
edge in $\Nreg$.
\item
There is a string joining $w_j$ and $w_k$ in $G$, the concatenation
of the strings corresponding to $w_j \in \ell_1$ and $w_k \in \ell_1$,
inducing an edge joining $w_j$ and $w_k$ in $\Nreg$.
\end{itemize}
Now, we normalize the graph $G$. What this means is, apply the procedure
described in the proof of \cite[Theorem 4.1]{Neu_plumb}.
In our situation, it suffices to only blow down, or apply zero-absorption
to vertices on the string joining $w_j$ and $w_k$.
There are two possible outcomes of this operation:
\begin{enumerate}[label=(\alph*)]
\item
the vertices $w_j,w_k$ remain vertices in $\Gn$, possibly with
a different Euler number, joined by a string consisting of vertices
with Euler number $\leq -2$,
\item \label{it:bad_abs}
the last operation in the sequence is a zero-absorption, making
one vertex out of $w_j,w_k$ and a zero-vertex between them.
\end{enumerate}
In order to finish the proof, it suffices to eliminate option \ref{it:bad_abs}.
If $C_j$ and $C_k$ are Seifert fibers in the pieces
corresponding to $w_j$ and $w_k$, then option \ref{it:bad_abs}
is equivalent to $C_j$ and $\pm C_k$ representing the same homology class
in the union of pieces in $\partial F_\A$ corresponding to the string.
This union is homeomorphic to $(S^1)^2 \times I$.
Let $H \cong \Z^2$ be its first integral homology group.
We will now show
that the fundamental classes $[C_j],[C_k]\in H$ are linearly independent.
It suffices to find a linear map $H \to \Z^2$ mapping $[C_j]$ and $[C_k]$
to linearly independent elements.
Denote by $u_{-r}, u_{-r-1},\ldots,u_{-1},u_0 = v_i, u_1,\ldots,u_s$
the vertices along the string joining $w_j$ and $w_k$ in $G$,
and name the vertices along it and its Euler numbers
as in \cref{fig:double_bamboo}. Set also $u_{-r-1} = w_j$ and
$u_{s+1} = w_k$.

\begin{figure}[ht]
\begin{center}
\input{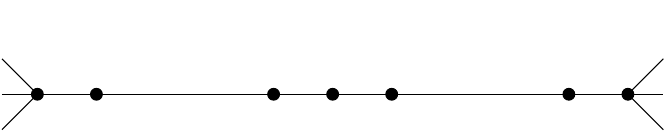_t}
\caption{We show that this bamboo, along with the end vertices
$w_j$ and $w_k$, does not collapse to just one vertex by Neumann's
normalization algorithm.}
\label{fig:double_bamboo}
\end{center}
\end{figure}

Let $C_l$ be a Seifert fiber in the piece corresponding to $u_l$
for $-r-1 \leq l \leq s+1$. We then have linear relations in $H$
\[
  -[C_{l-1}] + k_l [C_l] - [C_{l+1}], \qquad
  -r \leq l \leq s.
\] 
Furthermore, $[C_{l-1}]$ and $[C_l]$ form a basis for any $-r \leq l \leq s+1$.
Thus, there exists a unique linear isomorphism $\psi:H\to \Z^2$ satisfying
\[
  \psi\left([C_{-1}]\right) =
  \left(
  \begin{matrix}
    0 \\ 1
  \end{matrix}
  \right),\qquad
  \psi\left([C_0]\right) =
  \left(
  \begin{matrix}
    1 \\ 1
  \end{matrix}
  \right),\qquad
  \psi\left([C_1]\right) =
  \left(
  \begin{matrix}
    1 \\ 0
  \end{matrix}
  \right).
\]
Define integers $\xi_i, \eta_i$ by
\[
  \psi\left([C_i]\right) =
  \left(
  \begin{matrix}
    \xi_i \\ \eta_i
  \end{matrix}
  \right).
\]
Then, the theory of negative continued fractions
(see e.g. \cite[III.5]{BHPVdV} with $\mu_i = \xi_i-\eta_i$,
$\nu_i = \eta_i$ for $i\geq 0$) provides
\[
  (d-n_k)(\xi_{s+1} - \eta_{s+1}) = d\xi_{s+1}, \qquad
  \gcd(\xi_i,\eta_i) = 1,
\]
and so, since $d = a+b-1$ and $n_k = a$, we find
\[
  \psi\left([C_{s+1}]\right)
  = \frac{1}{\gcd(a-1,b)}
  \left(
  \begin{matrix}
    a-1 \\ -b
  \end{matrix}
  \right).
\]
In a similar way, we find
\[
  \psi\left([C_{-r-1}]\right)
  = \frac{1}{\gcd(b-1,a)}
  \left(
  \begin{matrix}
    -a \\ b-1
  \end{matrix}
  \right).
\]
As a result, the vectors $\psi\left([C_{s+1}]\right)$
and $\psi\left([C_{-r-1}]\right)$ are linearly independent, since
\[
  \det
  \left(
  \begin{matrix}
  a-1 & -a  \\
  -b  & b-1
  \end{matrix}
  \right)
  =
  -a-b+1
  =
  -d
  \neq
  0.\qedhere
\]
\end{proof}

\begin{ex}
In \cref{fig:D33} we see the graphs $G$ and $\Gn$ in the case
of the smallest double connected pencil $\DPen_{3,3}$.
All vertices have genus zero, black dots are vertices
with Euler number $-1$, red dots are special nodes with
Euler number $-5$, black lines are regular edges, and green lines
are strings with Euler numbers $-2,-3$, from left to right.

\begin{figure}[ht]
\begin{center}
\input{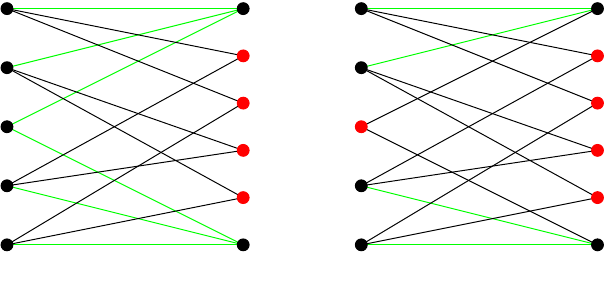_t}
\caption{The smallest double pencil, $\DPen_{3,3}$.
As topological graphs, they are complete bipartite graphs on $3$ and $3$
vertices.}
\label{fig:D33}
\end{center}
\end{figure}

\end{ex}

\section{General case: non-exceptional arrangements} \label{s:nonexc}

We conclude by addressing the general case. In order to do so, we need to detail how to recover the intersect poset of a non-exceptional arrangement $\A$ and also how to distinguish it from the one of a double connected pencil.

\begin{lem} \label{lem:nonexc}
If $\A$ is non-exceptional,
and $\Gn$ is a plumbing graph for $\partial F_\A$ in normal form,
then $\Gn$ determines the poset
associated with $\A$.
\end{lem}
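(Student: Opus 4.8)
The plan is to show that the graph $G$ constructed in \Cref{s:G}, which by Lemma \ref{lem:double_normal} is already in normal form for a non-exceptional $\A$ (so $\Gn = G$), ``determines'' the poset $P_\A$ in the sense of the definition in \ref{block}. First I would identify the special node graph $\Nsp$ of $G$. The vertices of $G$ are: the line vertices $v_i$ (Euler number $-1$, genus $0$, valency $\bar n_i = \delta_i \geq 3$ since $\A$ is non-exceptional, hence regular nodes); the multiple-point vertices $w_j$ with $n_j > 2$ (valency $n_j \geq 3$, hence regular nodes); the double-point vertices $w_j$ with $n_j = 2$ (valency $2$, genus $0$, Euler number $-d$, whose two neighbours are line vertices with Euler number $-1$ and genus $0$ — hence \emph{special} nodes); and the interior vertices of the strings attached to multiple points (valency $2$, Euler numbers $h_l \leq -2$, so neither regular nor special). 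Therefore $\Ndsp$ consists exactly of the $v_i$, the $w_j$ with $n_j>2$, and the $w_j$ with $n_j=2$, and the string contractions in $\Nsp$ join $v_i$ to $w_j$ precisely when $p_j \in \ell_i$. Thus $\Nsp$ is, as an abstract graph, the incidence graph between lines and intersection points of $\A$, which is bipartite with the natural partition $\Nd_1 = \{v_i\}$, $\Nd_2 = \{w_j\}$; and $(\Nsp, \Nd_1)$ corresponds to $P_\A$ by the dictionary of \ref{block}.

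Next I would verify that this partition satisfies the four bulleted conditions in the definition of ``$G$ determines $(P,<)$'': nodes in $\Nd_1$ have Euler number $-1$ and genus $0$ (immediate from the construction in \Cref{s:G}); $\Nd_1$ contains no special nodes (the $v_i$ are regular nodes since $\delta_i \geq 3$); and the continued-fraction condition on chains from $v_i\in\Nd_1$ to a regular node $w_j\in\Nd_2$: by construction the chain from $v_i$ to $w_j$ with $n_j>2$ has Euler numbers $-h_1,\ldots,-h_r$ with $[h_1,\ldots,h_r] = d/n_j = |\Nd_1|/\delta_{w_j}$, since $|\Nd_1| = d$ and $\delta_{w_j} = n_j$ in $\Nsp$. (Chains to a \emph{special} node $w_j$, $n_j = 2$, are trivial, a single edge, so the condition does not apply to them.) This shows that $G$ determines the poset $P_\A$ with this particular partition.

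The main obstacle, and the real content of the lemma, is \emph{uniqueness}: the definition requires that there be \emph{no other} partition $\Ndsp = \Nd_1' \amalg \Nd_2'$ into maximal independent sets meeting all four conditions. I would argue this as follows. Any valid $\Nd_1'$ consists only of regular nodes, all with Euler number $-1$ and genus $0$; in $G$ the only such vertices are the $v_i$, so $\Nd_1' \subseteq \{v_i\}$. Conversely, if some $v_i \notin \Nd_1'$ then $v_i \in \Nd_2'$; since $\delta_i \geq 3$ and $\A$ is non-exceptional, $\ell_i$ carries at least three intersection points, at least one of which, $p_j$, has $n_j > 2$ (here one uses that not all three can be double points forcing $\A$ into one of the exceptional families — this is where non-exceptionality is essential and should be spelled out), giving a regular node $w_j \in \Nd_2$ adjacent to $v_i$ in $\Nsp$; then, for the partition $(\Nd_1', \Nd_2')$, the vertex $w_j$ must lie in $\Nd_1'$ (as $\Nd_1', \Nd_2'$ are the two sides of a bipartition of $\Nsp$ and $v_i, w_j$ are adjacent), but $w_j$ has genus $g_j = \tfrac{(c_j-1)(n_j-2)}{2}$ and Euler number $e_j = \eNSz_j - n_j \neq -1$ in general — more robustly, one checks the chain condition fails: a chain from $w_j \in \Nd_1'$ to an adjacent $v_i \in \Nd_2'$ must have $[h_1,\ldots,h_r] = |\Nd_1'|/\delta_{v_i}$ for \emph{every} such neighbour $v_i$, forcing $\delta_{v_i} = \bar n_i$ to be the same for all lines through $p_j$ and equal to $|\Nd_1'|/[\ldots]$, which fails as soon as two lines through $p_j$ carry different numbers of points — and the only way to avoid all such contradictions for every multiple point simultaneously is again to be an exceptional arrangement. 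Hence $\Nd_1' = \{v_i\} = \Nd_1$, the partition is unique, and $(P,<) = P_\A$ as claimed. I expect the bookkeeping in this uniqueness step — ruling out the ``flipped'' partitions by playing the genus/Euler-number obstruction against the continued-fraction obstruction, while invoking non-exceptionality at exactly the right points — to be the technical heart of the argument.
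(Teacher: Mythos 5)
Your overall strategy coincides with the paper's: take $\Gn = G$ via Lemma \ref{lem:double_normal}, identify $\Nsp$ with the incidence graph of $P_\A$, verify the four conditions for the partition $\Nd_1 = \{v_i\}$, and then prove uniqueness of the partition. However, both of the genuinely hard steps are gapped. First, your identification of the special nodes is flawed: you dismiss the interior vertices of the strings as ``neither regular nor special'' because their own Euler numbers are $\leq -2$, but speciality is a condition on the \emph{neighbours} of the vertex, not on the vertex itself. If the string from $v_i$ to a multiple point $w_j$ (with $n_j>2$) has length one, that single vertex has neighbours $v_i$ and $w_j$, and it becomes a special node precisely when $w_j$ has Euler number $-1$ and genus $0$ --- in which case $\Nsp$ acquires extra vertices and is no longer the incidence graph. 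The paper rules this out explicitly: $g_j=0$ and $n_j>2$ force $c_j=1$, while a length-one string forces $d/n_j$ to be an integer, so $n_j \mid d$ and hence $n_j = \gcd(d,n_j)=1$, absurd. Your write-up has no substitute for this step.

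Second, the uniqueness argument is where the real content lies, and your version does not close it. The claim ``the only vertices with Euler number $-1$ and genus $0$ are the $v_i$'' is asserted, not proved; nothing a priori prevents every $w_j$ from having $e_j=-1$ and $g_j=0$ (this happens exactly when $c_j=1$ and $m_j'=n_j-1$, which is not absurd pointwise). Your fallback via the chain condition only yields a contradiction when two lines through a common point carry different numbers of intersection points, and the phrase ``the only way to avoid all such contradictions \dots is to be exceptional'' skips the combinatorially homogeneous case where all $\bar n_i$ coincide and all $n_j$ coincide. The paper's proof is devoted precisely to that case: assuming the swapped partition satisfies every condition, the multiplicity system forces each string to have $k_1=\dots=k_s=2$ except for a single $3$, hence $[h_1,h_2]$ with $h_2=n_j$ and $h_1=(d+1)/n_j$; combining the reversed chain condition with the count $d=\bar n_i(n_j-1)+1$ and $\bar n_i n_j=d+1$ then gives $\bar n_i=2$, contradicting non-exceptionality. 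Finally, a smaller but genuine error: your parenthetical claim that a line of a non-exceptional arrangement must pass through some point with $n_j>2$ is false (every line of a generic arrangement of $\geq 4$ lines meets only double points); that configuration is instead excluded because the swapped $\Nd_1'$ would then consist entirely of special nodes.
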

\begin{proof}
By lemma \ref{lem:double_normal}, we can assume that $\Gn = G$.
It is then clear that the special node graph $\Nsp$ is connected and bipartite,
having maximal independent sets %
$\V_1$ consisting of $v_i$ for $\ell_i \in \A$,
and $\V_2$ consisting of $w_j$ for intersection points $p_j$.
To see this, it is sufficient to see that any special node in $\Gn$
corresponds to a double point. The only other way that a special node
could arise is if there is an intersection point $p_j$ such that
\begin{itemize}
\item
we have $n_j > 2$ and and $g_j = 0$,
\item
if $\ell_i \ni p_j$, then the corresponding bamboo joining
$v_i$ and $w_j$ consists of a single vertex with Euler number $-d' \leq 2$.
\end{itemize}
Since $n_j - 2 \neq 0$, and $g_j = 0$, we have $c_j-1 = 0$.
Therefore we get
\[
  d' = \frac{d}{n_j}, \qquad \gcd(d,n_j) = 1,
\]
and so $n_j = 1$, which is impossible.
As a result of this, we see that
\begin{itemize}
\item
$(\Nsp,\V_1)$ corresponds to $P_\A$,
\item
$g_i = 0$ and $e_i = -1$ for all nodes $v_i \in \V_1$,
\item
any special node is in $\V_2$,
\item
if $v \in \V_1$ and $w \in \V_2$ are joined by a chain, and $\delta_w \geq 3$,
then $v = v_i$ for some $\ell_i \in \A$, and $w = w_i$ for some intersection
point $p_j$ with $n_j \geq 3$. The Euler numbers
$-h_1,\ldots,-h_r$ on the chain then satisfy
\[
  [h_1,\ldots,h_r] = \frac{d}{n_j} = \frac{|\V_1|}{\delta_w}.
\]
\end{itemize}
We must show that $\V_1$ is the unique maximal independent set in $N$
satisfying these conditions, i.e. that the
above conditions are not satisfied if we switch the roles of
$\V_1$ and $\V_2$.
Assume the contrary, in order to derive a contradiction.
Our assumptions thus imply
\begin{itemize}
\item
there are no special nodes, and so no double intersection points in $\A$,
\item
$g_j = 0$ and $e_j = -1$ for all intersection points $j$,
\item
with $v \in \V_1$, $w\in \V_2$ joined by a string with Euler numbers
$h_1,\ldots,h_r$ as above, we have
\begin{equation} \label{eq:reverse_h}
  [h_r, \ldots, h_1] = \frac{|\V_2|}{\delta_v}.
\end{equation}
\end{itemize}
The combined conditions $g_j = 0$ and $n_j > 2$ imply
$c_j = 1$ for all $j$. We therefore have $m_j = n_j$.
Since $e_j = -1$, we have $\eNSz_j = n_j - 1$, but we also have
$\eNSz_j = m_j'$ by Lemma \ref{lem:eNSz}. Therefore,
\begin{equation} \label{eq:mjpmjmo}
  m_j' = m_j - 1.
\end{equation}
With multiplicities $m_1, \ldots, m_s$ as
in \cref{fig:strn}, and setting $m_0 = -1$ and $m_{s+1} = m_j$,
we have equations
\[
  -m_{l-1} + k_l m_l - m_{l+1} = 0,\qquad
  l = 1,\ldots,s,
\]
which implies that we have an increasing sequence of integers
\[
  0
  = m_1 - m_0
    \leq m_2-m_1
    \leq m_3-m_2
    \leq \cdots
    \leq m_{s+1} - m_s = 1.
\]
Here, the last equality is \cref{eq:mjpmjmo}.
As a result, there exists a unique $t$ among $1,\ldots,s$ such that
\[
  m_l - m_{l-1}
  =
\begin{cases}
  0 & l \leq t,\\
  1 & l > t.\\
\end{cases}
\]
This implies that all the numbers $k_1, \ldots, k_s$ are $2$, with the sole
exception of $k_t = 3$. We therefore get
\[
  [k_1, \ldots, k_s] = [2^{t-1}, 3, 2^{s-t}],\qquad
  [h_1, \ldots, h_r] = [h_1,h_2] = [t+1, s-t+2].
\]
Furthermore, since $c_j = 1$, and
\[
  \frac{d}{n_j} = [h_1,h_2] = h_1 - \frac{1}{h_2} = \frac{h_1h_2 - 1}{h_2},
\]
we have $h_2 = n_j$ and $h_1h_2 - 1 = d$, i.e.
\begin{equation} \label{eq:theotherone}
  h_1 = \frac{d+1}{n_j}.
\end{equation}
Similarly, using \cref{eq:reverse_h}, we find that if the string
joins $v_i$ and $w_j$, then
\[
  h_1 = n_i,\qquad
  h_2 = \frac{|\V_2|+1}{n_i}.
\]
In particular, the numbers $h_1,h_2$ only depend on the vertex $v_i$, i.e.
any string adjacent to the same $v_i$ has the same Euler numbers $h_1, h_2$.
Since the same is true for strings adjacent to any vertex $w_j$, it
follows that all strings joining any pair $v\in \V_1$ and $w \in \V_2$
have the same Euler numbers $h_1, h_2$. In particular, all vertices
$v_i \in \V_1$ have the same valency $\bar{n}_i = h_1$, and all vertices
$w_j \in \V_2$ have the same valency $n_j = h_2$. Thus,
any line $\ell_i$ contains $\bar{n}_i$ singular points, through each of which,
$n_j = h_2$ lines pass, including $\ell_i$. Since every line in $\A$ passes
through one of these points, we find
\[
  d = \bar{n}_i (n_j - 1) + 1.
\]
At the same time, by \cref{eq:theotherone}, we find $\bar{n}_i n_j = d + 1$.
Together, these equations imply $\bar{n}_i = 2$, contradicting
$\A$ being non-exceptional.
\end{proof}

\begin{lem}\label{lem:not_bipartite}
If $\A$ is not exceptional, then the regular node graph $\Nreg$
associated with $\Gn$
is not a complete bipartite graph.
\end{lem}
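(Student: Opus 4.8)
The plan is to use the explicit normal form $\Gn = G$ produced in (the proof of) Lemma~\ref{lem:double_normal}, translate the regular node graph $\Nreg$ directly into the incidence data of $P_\A$, and then argue by contradiction that such a graph cannot be complete bipartite unless $\A$ is exceptional. For the first step, I would record that by the analysis inside the proof of Lemma~\ref{lem:nonexc} the only special nodes of $G$ (when $\A$ is non-exceptional) are the vertices $w_j$ attached to double points. Hence the regular nodes of $G$ are exactly the vertices $v_i$, each of valency $\bar n_i \ge 3$, together with the vertices $w_j$ for the multiple points $n_j \ge 3$, and the edges of $\Nreg$ are: $v_i \sim w_j$ whenever $p_j \in \ell_i$ and $n_j \ge 3$; $v_i \sim v_{i'}$ whenever $\ell_i \cap \ell_{i'}$ is a double point; and no edge ever joins two vertices $w_j$. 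Since any two lines of $\A$ meet, $G$, hence $\Nreg$, is connected, so if it were complete bipartite both parts would be non-empty.

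\textbf{Main step.} Suppose $\Nreg = K_{A,B}$ with $A,B \neq \varnothing$. The vertices $w_j$ form an independent set, so after relabeling they all lie in $A$; in particular $B$ consists only of vertices of the form $v_i$. If $\A$ has two distinct multiple points $p_j, p_{j'}$, then every $v_i \in B$ is adjacent to both $w_j$ and $w_{j'}$, i.e.\ $\ell_i$ passes through $p_j$ and $p_{j'}$, which forces $\ell_i$ to be the unique line through those two points; thus $|B| \le 1$, contradicting that $B$ contains the $n_j \ge 3$ lines through $p_j$.

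\textbf{Degenerate cases.} It then remains to treat arrangements with at most one multiple point. If $\A$ has no multiple point, every intersection point is double, so $\Nreg$ is the complete graph $K_d$ on the vertices $v_i$; since a non-exceptional arrangement has $d \ge 4$, the graph $K_d$ contains a triangle and is not bipartite. If $\A$ has exactly one multiple point $p_j$, then (with $w_j \in A$) any line not through $p_j$ is non-adjacent to $w_j$, hence lies in $A$; but two such lines meet at a point $\neq p_j$, which must be a double point, so they would be adjacent in $\Nreg$, impossible inside the independent set $A$. Hence at most one line avoids $p_j$: if none does, $\A$ is a pencil, and if exactly one line $\ell_0$ does, a short incidence argument (the intersections of $\ell_0$ with the $d-1$ lines through $p_j$ are pairwise distinct double points, so each of those lines carries only two points) shows $\A$ is a near-pencil. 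Either way $\A$ is exceptional, contrary to hypothesis.

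\textbf{Main obstacle.} I expect the only delicate point to be the first step: one must be careful about exactly which vertices of the normalized graph are regular nodes and which are special nodes, in particular that no $w_j$ with $n_j \ge 3$ can degenerate into a special node, so that the description of $\Nreg$ in terms of $P_\A$ is precisely the one used above. Once this dictionary is in place, the remaining steps are elementary projective incidence arguments.
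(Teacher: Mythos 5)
Your proposal is correct. The first step (the dictionary between $\Nreg$ and the incidence data of $P_\A$: regular nodes are the $v_i$ and the $w_j$ with $n_j\ge 3$, edges $v_i\sim w_j$ for $p_j\in\ell_i$ with $n_j\ge3$, edges $v_i\sim v_{i'}$ for double points, and no $w$--$w$ edges because no line carries only two intersection points) is exactly the paper's starting point, and your verification that no $w_j$ with $n_j\ge 3$ degenerates is the right thing to worry about. Where you diverge is in the combinatorial endgame. The paper, after placing all multiple points in one part, studies the opposite part of the line-partition: it consists of \emph{generic} lines, of which there can be at most one, and then splits on whether such a line exists, reducing to pencils, near-pencils, or $d\le 2$. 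You instead split on the number of multiple points: with two distinct multiple points $p_j,p_{j'}$ in the same part, completeness forces every line in the opposite part through both, so that part has at most one vertex while it must contain the $n_j\ge3$ lines through $p_j$ --- an immediate contradiction from "two points determine a line." Your route disposes of the main case more directly and isolates the genuinely degenerate configurations (generic arrangements, giving $K_d$ with a triangle, and arrangements with a single multiple point, giving pencils/near-pencils) into short separate checks; the paper's route keeps a single thread but needs the slightly more delicate "at most one generic line, and deleting it returns to the previous case" step. Both are complete; yours is arguably the cleaner organization of the same underlying incidence argument. One cosmetic remark: the identification of the \emph{regular} nodes needs only the valency/genus condition and does not actually require the classification of \emph{special} nodes from Lemma~\ref{lem:nonexc}, so your "hence" there is doing less work than it suggests.
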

\begin{proof}
By Lemma \ref{lem:double_normal}, the regular nodes of $\Gn$
correspond to precisely the lines $\ell_i$ and intersection points
$p_j$ with $n_j \geq 3$, and $\Gn$ is connected.
Denote by $\Lambda$ the set of lines,
and $\Pi$ the set of intersection points $p_j$ with $n_j \geq 3$.
If $\Nreg$ is bipartite, then there exists a unique partitioning of each set
\[
  \Lambda = \Lambda_1 \amalg \Lambda_2,\qquad
  \Pi = \Pi_1 \amalg \Pi_2
\]
corresponding to a partitioning of the regular nodes in $\Gn$.
If $\ell_{i_1} \in \Lambda_1$ and $\ell_{i_2} \in \Lambda_2$, then
these lines intersect in a double point, providing an edge in $\Nreg$,
whereas if $\ell_i,\ell_{i'} \in \Lambda_1$, then $\ell_i$ and $\ell_{i'}$
intersect in a point of multiplicity $\geq 3$.
If $\Nreg$ is a \emph{complete} bipartite graph, then there is an
edge joining any pair of vertices in $\Pi_1$ and $\Pi_2$. But, since
there is no line containing only two intersection points, a vertex
in $\Pi_1$ is not joined with a vertex in $\Pi_2$, so one of these
sets is empty, let us assume that $\Pi_1 = \emptyset$.
If $\Pi_2 = \emptyset$ as well, then $\A$ is generic, i.e. has only double
interesction points. In this case, the node graph $\Nreg$ is a complete graph
on $d$ vertices, and so either not bipartite, or exceptional.

Now,
any line $\ell_i$ containing a point of multiplicity $\geq 3$
is in $\Lambda_1$, so $\Lambda_2$ consists of \emph{generic} lines,
i.e. lines which only contain double points.
Any such generic line is necessarily in $\Lambda_2$, since it intersects
any line in $\Lambda_1$ in a double point.
Two distinct generic lines cannot exist, since they would
intersect each other in a double point.
As a result, there can be at most one generic line.
If there is no generic line, then all intersection points have multiplicity
$\geq 3$, and all lines contain all points. Thus, there is either exactly
one line, $d=1$, which is not the case, since $\A$ is non-exceptional,
or there is exactly one intersection point, which is also excluded,
since the pencil is exceptional.
If there does exists a generic line $\ell_i \in \Lambda_2$,
then the arrangement $\A' = \A\setminus\{\ell_i\}$ is of the above type,
which implies that $\A$ either contains two lines, or it is a near-pencil.
But these cases are exceptional.
\end{proof}


\begin{thebibliography}{ACCM05}

\bibitem[ACCM05]{ACCM:real_ZP}
Enrique Artal, Jorge Carmona, Jos{\'e}~I. Cogolludo{-Agust{\'i}n}, and Miguel
  Marco.
\newblock Topology and combinatorics of real line arrangements.
\newblock {\em Compos. Math.}, 141(6):1578--1588, 2005.

\bibitem[AGV20]{GBVS:real_pi1}
Enrique Artal, Beno{\^{\i}}t {Guerville-Ball{\'e}}, and Juan V{iu-Sos}.
\newblock Fundamental groups of real arrangements and torsion in the lower
  central series quotients.
\newblock {\em Exp. Math.}, 29(1):28--35, 2020.

\bibitem[BHPV04]{BHPVdV}
Wolf~P. {Barth}, Klaus {Hulek}, Chris A.~M. {Peters}, and Antonius {Van de
  Ven}.
\newblock {\em {Compact complex surfaces}}.
\newblock Berlin: Springer, 2nd enlarged ed. edition, 2004.

\bibitem[Dim17]{Dimca:book}
Alexandru Dimca.
\newblock {\em Hyperplane arrangements: an introduction}.
\newblock Universitext. Springer, Cham, 2017.

\bibitem[GV19]{GueViu:config}
Beno{\^{\i}}t {Guerville-Ball{\'e}} and Juan {Viu-Sos}.
\newblock Configurations of points and topology of real line arrangements.
\newblock {\em Math. Ann.}, 374(1-2):1--35, 2019.

\bibitem[Mil68]{Milnor_hyp}
John Milnor.
\newblock {\em Singular points of complex hypersurfaces}, volume~61 of {\em
  Ann. of Math. Stud.}
\newblock Princeton University Press, Princeton, N.J.; University of Tokyo
  Press, Tokyo, 1968.

\bibitem[{Neu}81]{Neu_plumb}
Walter~D. {Neumann}.
\newblock {A calculus for plumbing applied to the topology of complex surface
  singularities and degenerating complex curves.}
\newblock {\em {Trans. Am. Math. Soc.}}, 268:299--343, 1981.

\bibitem[NSz12]{Nem_Szil}
Andr{\'a}s {N{\'e}methi} and {\'A}gnes {{Sz}il\'ard}.
\newblock {\em Milnor fiber boundary of a non-isolated surface singularity},
  volume 2037 of {\em Lecture Notes in Math.}
\newblock Springer, Heidelberg, 2012.

\bibitem[OS80]{OrlikSolomon}
Peter Orlik and Louis Solomon.
\newblock Combinatorics and topology of complements of hyperplanes.
\newblock {\em Invent. Math.}, 56(2):167--189, 1980.

\bibitem[OT92]{OrlikTerao92}
Peter Orlik and Hiroaki Terao.
\newblock {\em Arrangements of hyperplanes}, volume 300 of {\em Grundlehren der
  Mathematischen Wissenschaften}.
\newblock Springer-Verlag, Berlin, 1992.

\bibitem[PP07]{P-P_cfrac}
Patrick Popescu-Pampu.
\newblock The geometry of continued fractions and the topology of surface
  singularities.
\newblock In {\em Singularities in geometry and topology 2004}, volume~46 of
  {\em Adv. Stud. Pure Math.}, pages 119--195. Math. Soc. Japan, Tokyo, 2007.

\bibitem[PS17]{PapadimaSuciu:survey}
Stefan Papadima and Alexander~I. Suciu.
\newblock The milnor fibration of a hyperplane arrangement: from modular
  resonance to algebraic monodromy.
\newblock {\em Proceedings of the London Mathematical Society},
  114(6):961–1004, 2017.

\bibitem[Ryb11]{Rybnikov}
Gregory~L. Rybnikov.
\newblock On the fundamental group of the complement of a complex hyperplane
  arrangement.
\newblock {\em Funktsional. Anal. i Prilozhen.}, 45(2):71--85, 2011.
\newblock Preprint available at {\tt arXiv:math.AG/9805056}.

\bibitem[Suc14]{Suciu14:survey}
Alexander~I. Suciu.
\newblock Hyperplane arrangements and {Milnor} fibrations.
\newblock {\em Ann. Fac. Sci. Toulouse, Math. (6)}, 23(2):417--481, 2014.

\bibitem[Suc17]{Suciu17:survey}
Alexander~I. Suciu.
\newblock On the topology of the {M}ilnor fibration of a hyperplane
  arrangement.
\newblock {\em Rev. Roumaine Math. Pures Appl.}, 62(1):191--215, 2017.

\bibitem[Sug25]{sugawara1}
Sakumi Sugawara.
\newblock First homology groups of the {Milnor} fiber boundary for generic
  hyperplane arrangements in $\mathbb{C}^{3}$.
\newblock Preprint, {arXiv}:2404.01555 [math.{GT}] (2025), 2025.

\end{thebibliography}

\end{document}